\def\cl{\centerline}
\def\vs{\vspace*}
\def\M{\mathscr{W}_{A}(\Gamma)}
\def\Z{\mathbb{Z}}
\def\T{\mathcal{L}}
\def\C{\mathbb{C}}
\def\M{\mathcal{M}}
\def\ni{\noindent}
\numberwithin{equation}{section}
\newtheorem{theo}{Theorem}[section]
\newtheorem{defi}[theo]{Definition}
\newtheorem{coro}[theo]{Corollary}
\newtheorem{lemm}[theo]{Lemma}
\newtheorem{prop}[theo]{Proposition}
\newtheorem{clai}{Claim}
\newtheorem{case}{Case}
\newtheorem{rema}[theo]{Remark}
\begin{document}
\begin{center}
\cl{\large\bf \vs{8pt}A family of simple  non-weight modules over the}
\cl{\large\bf \vs{8pt}  twisted  $N=2$  superconformal algebra}
\cl{ Haibo Chen, Xiansheng Dai and Mingqiang Liu$^{*}$}
\end{center}
\footnote{$^{*}$ Corresponding author.}
{\small
\parskip .005 truein
\baselineskip 3pt \lineskip 3pt

\noindent{{\bf Abstract:}
We construct a class of non-weight modules over the twisted $N=2$ superconformal algebra $\T$.
Let $\mathfrak{h}=\C L_0\oplus\C G_0$  be the Cartan
subalgebra of $\T$, and let $\mathfrak{t}=\C L_0$ be the Cartan
subalgebra of even part      $\T_{\bar 0}$.
These modules over $\T$  when
restricted to the     $\mathfrak{h}$ are free of rank $1$ or when restricted to the     $\mathfrak{t}$ are free of rank $2$.
We provide the sufficient and necessary  conditions for those   modules being simple, as
well as giving the sufficient and necessary  conditions
for two  $\T$-modules being isomorphic.
We also compute the action of  an automorphism  on them.
Moreover, based on  the weighting functor introduced in \cite{N2}, a class of   intermediate series modules $A_t(\sigma)$ are obtained. As a byproduct, we  give a sufficient condition for two $\T$-modules are  not isomorphic.
\vs{5pt}

\ni{\bf Key words:}
Twisted  $N=2$  superconformal algebra, Non-weight module, Simple module.}

\ni{\it Mathematics Subject Classification (2010):} 17B10, 17B65, 17B68.}
\parskip .001 truein\baselineskip 6pt \lineskip 6pt
\section{Introduction}

Throughout this paper,  we denote by $\C,\C^*,\Z$  and $\Z_+$ the sets of complex numbers, nonzero complex numbers, integers and  nonnegative integers respectively. Note that $\frac{1}{2}\Z=(\frac{1}{2}+\Z)\cup\Z$.
   All vector superspaces
(resp. superalgebras, supermodules)   and
 spaces (resp. algebras, modules)    are considered to be over
$\C$.  For a Lie algebra $\mathcal{G}$, we use $\mathcal{U}(\mathcal{G})$ to denote
the universal enveloping algebra. 

The $N=2$ superconformal algebras have four sectors: the Neveu-Schwarz sector,
the Ramond sector, the topological sector  and the twisted sector, which
 was constructed independently by Kac  and by  Ademollo et
al. (see \cite{K,A}).
 It is well known that the first three superconformal algebras are isomorphic to each other, and they are
sometimes called the untwisted sector. Therefore, there are essentially two different
sectors.

The   representation theory  of superconformal algebras  has   attracted
a lot of attention from mathematicians and physicists, which includes  weight  representations
and non-weight  representations.  Some characteristics  related to the structure of Verma modules  over the twisted
 $N=2$ superconformal algebra  were investigated  in \cite{IK,DG}. The classification of indecomposable modules of the intermediate series over the twisted
$N=2$ superconformal algebra   was
achieved in \cite{LSZ}.  All  simple Harish-Chandra modules over the $N=1$ and untwisted $N=2$ superconformal algebras were
classified  respectively in \cite{LPX,S}. In fact, with increasing the number of fermionic currents $N$, the  representations of the superconformal algebras becomes   more and more complicated.

A class of non-weight modules   called  free $\mathcal{U}({\mathfrak{h}})$-modules   were constructed and studied in recent years. Here, ${\mathfrak{h}}$ is  the Cartan subalgebra  (modulo the central elements) and  acts freely.
The free  $\mathcal{U}({\mathfrak{h}})$-modules  were first introduced by Nilsson  in  \cite{N1}  for the complex matrices algebra $\mathfrak{sl}_{n+1}$. In \cite{TZ1}, Tan and Zhao
obtained  them by a very different method.
From then on, the free $\mathcal{U}({\mathfrak{h}})$-modules  were widely investigated over all kinds of  Lie algebras, such as   the finite-dimensional simple Lie algebras \cite{N2} and the infinite-dimensional Lie algebras related to  Virasoro algebras and so on (see, e.g., \cite{WZ,W,LZ,TZ,CG,CY,HCS}).
It is worth mentioning that the free $\mathcal{U}({\mathfrak{h}})$-modules   over some Lie superalgebras  were also  characterized.  In \cite{CZ},    $\mathcal{U}({\mathfrak{h}})$-modules over the basic
Lie superalgebras were investigated, which   showed that $\mathfrak{osp}(1|2n)$ is the only basic Lie superalgebra that
admits such modules. The $\mathcal{U}(\mathfrak{h})$-free modules on some
 infinite-dimensional   Lie superalgebras  such as super-BMS$_3$ algebras,  super-Virasoro algebras,  untwisted $N=2$ super conformal algebras   were  respectively studied  in \cite{YYX1,YYX2,CDLS}.  The aim of the present paper is to classify   such modules for the twisted $N=2$ superconformal algebras, and we further investigate the relevant   twisted modules and   weight modules.

The rest of the paper is organized as follows.

In section $2$, we first recall  the definition of twisted $N=2$ superconformal algebra.  A class
 of non-weight modules over the $\T$ are defined.    Then the simplicities and
isomorphism classes of these modules are respectively  determined in Theorems \ref{th2.3} and \ref{th2ere.4} (also see Theorems \ref{th346782.3} and \ref{th9082.4}).

In Sections $3$ and $4$,    the free $\mathcal{U}(\mathfrak{h})$-modules of rank $1$  over   $\T$ and the free $\mathcal{U}(\mathfrak{t})$-modules of rank $2$  over   $\T$ are respectively  classified in Theorems \ref{th3.5} and  \ref{th9.5}.

 In Section  $5$,
using the automorphism results  in  \cite{FHY}, we   define a class of twisted modules
of $\T$.

At last,   applying the right exact weighting functor introduced  in \cite{N2} to the modules $\mathcal{M}_t(\lambda,\alpha)$,
  we present that the resulting weight modules are exactly  $A_t{(\sigma)}$.

\section{Non-weight modules}
In this section,    we construct  a class of non-weight modules over the twisted  $N=2$ superconformal algebra.
Notice that these modules  are free of rank $1$ when regarded as $\mathcal{U}(\mathfrak{h})$-modules, while these modules are free of rank $2$ when regarded as
$\mathcal{U}(\mathfrak{t})$-modules.

Let us  first recall the definition of  $N=2$ superconformal algebras:
the twisted $N=2$ algebra (i.e. with mixed boundary conditions for the fermionic fields).
\begin{defi}
  {\bf The twisted  $N=2$ superconformal algebra}  $\T$
 is defined as an infinite-dimensional Lie superalgebra over $\C$ with basis
$\{L_m,I_r,G_{p}
\mid m\in \Z, r\in  \frac{1}{2}+\Z, p\in\frac{1}{2}\Z\}$ and  satisfying the following  non-trivial super brackets:
 \begin{equation}\label{def1.1}
\aligned
&[L_m,L_n]= (m-n)L_{m+n},\
[L_m,I_r]= -rI_{m+r},\\&
[L_m,G_p]= (\frac{m}{2}-p)G_{m+p},\ [I_r,G_p]=G_{r+p}, \\&
 [G_p,G_q]= \left\{\begin{array}{llll}
(-1)^{2p}2L_{p+q}&\mbox{if}\
p+q\in\Z,\\[4pt]
(-1)^{2p+1}(p-q)I_{p+q}&\mbox{if}\
p+q\in\frac{1}{2}+\Z,
\end{array}\right.
\endaligned
\end{equation}
  where $m,n\in\Z,$ $r\in  \frac{1}{2}+\Z,$ $p,q\in\frac{1}{2}\Z$.
  \end{defi}
The $\Z_2$-graded of  $\T$ is defined by   $\T=\T_{\bar 0}\oplus \T_{\bar 1}$, where
$\T_{\bar 0}=\{L_m,I_r\mid m\in \Z,r\in\frac{1}{2}+\Z\}$ and $\T_{\bar 1}=\{G_p\mid p\in \frac{1}{2}\Z\}$.
  Clearly,  $\T$ contains
the Witt algebra $\mathcal{V}=\mathrm{span}_{\C}\{L_m
\mid m\in \Z\}$ and the  centerless $N=1$ super-Virasoro algebra.
  Fixing $m\in\Z,\mu\in\C^*$,   we denote
 $$\mathbb{L}_m:=L_m+\mu I_{m-\frac{1}{2}}.$$ 
The Lie algebra spanned by  $\{\mathbb{L}_{m}\mid m\in\Z\}$ over $\C$ is isomorphic to the Witt algebra.

Let $\mathcal{G}$ be a Lie superalgebra, and let $V=V_{\bar0}\oplus V_{\bar1}$ be a  $\Z_2$-graded vector space.  Any element $v$ in $V_{\bar0}$ (resp. $v$ in $V_{\bar1}$)
is said to be even   (resp. odd). If
$v$ is even (resp. odd), we  define $|v|=\bar 0$  (resp. $|v|=\bar 1$).  Elements in  $V_{\bar0}$ or  $V_{\bar1}$ are called homogeneous. All  elements in superalgebras and modules are homogenous unless
specified.
A $\mathcal{G}$-module is a $\Z_2$-graded vector space $V$ together
with a bilinear map $\mathcal{G}\times V\rightarrow V$, denoted $(x,v)\mapsto xv$ such that
$$x(yv)-(-1)^{|x||y|}y(xv)=[x,y]v
\quad \mathrm{and}\quad
 \mathcal{G}_{\bar i} V_{\bar j}\subseteq  V_{\bar i+\bar j},$$
where $\bar i,\bar j\in\Z_2,x, y \in \mathcal{G}, v \in V$. Thus there is a parity-change functor $\Pi$ on the category of
$\mathcal{G}$-modules to itself, namely, for any module
$V=V_{\bar0}\oplus V_{\bar1}$, we have $\Pi(V_{\bar 0})=V_{\bar 1}$ and $\Pi(V_{\bar 1})=V_{\bar0 }$.

Take $\lambda\in\C^*,\alpha\in\C$.
 Let $\Omega(\lambda,\alpha)=\C[t]$
as a vector space and  define the  action of Witt algebra as follows:
\begin{eqnarray*}
&&L_m(f(t))=\lambda^m(t+m\alpha)f(t+m),
\end{eqnarray*}
where $m\in\Z,f(t)\in\C[t].$

The following results   were given  in  \cite{LZ,TZ}.
\begin{theo}\label{le2.1}
For $\lambda\in\C^*,\alpha\in\C$,
then
\begin{itemize}
\item[\rm(1)] $\Omega(\lambda,\alpha)$ is a $\mathcal{V}$-module;
\item[\rm(2)] $\Omega(\lambda,\alpha)$ is simple if and only if  $\alpha\neq0$;
\item[\rm(3)] $t\Omega(\lambda,0)\cong\Omega(\lambda,1)$  is the unique
simple proper submodule of $\Omega(\lambda,0)$ with codimension $1$;
\item[\rm(4)] Any free $\mathcal{U}(\C L_0)$-module  of rank $1$ over the Witt algebra  is isomorphic to
some  $\Omega(\lambda,\alpha)$  for $\lambda\in\C^*,\alpha\in\C$.
\end{itemize}
\end{theo}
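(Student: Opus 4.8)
The plan is to treat the four assertions in turn, reusing each for the next. For (1) I would verify the single relation $[L_m,L_n]=(m-n)L_{m+n}$ directly on a polynomial $f(t)$: applying the defining formula twice gives $L_mL_nf(t)=\lambda^{m+n}(t+m\alpha)(t+m+n\alpha)f(t+m+n)$, and after subtracting the symmetric term the quadratic factor collapses to $(m-n)\bigl(t+(m+n)\alpha\bigr)$, which is exactly $(m-n)L_{m+n}f(t)$. This is a routine computation.

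For (2), first note that when $\alpha=0$ the explicit factor $t$ in $L_mf(t)=\lambda^m t\,f(t+m)$ forces $L_m(\C[t])\subseteq t\C[t]$, so $t\C[t]$ is a proper nonzero submodule and $\Omega(\lambda,0)$ is not simple. Conversely, assume $\alpha\neq0$ and let $W\neq0$ be a submodule; I want to produce a nonzero constant in $W$. Taking $0\neq f\in W$ of degree $n$ (monic, say), consider $p_m(t):=\lambda^{-m}L_mf=(t+m\alpha)f(t+m)\in W$. Viewed as a polynomial in the parameter $m$, $p_m(t)=\sum_{j=0}^{n+1}m^jq_j(t)$ has degree $n+1$ in $m$, and a Vandermonde argument over $n+2$ distinct integer values of $m$ shows each coefficient $q_j(t)$ is a $\C$-linear combination of the $p_m$, hence lies in $W$. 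The top coefficient is the constant $q_{n+1}(t)=\alpha\neq0$, so $1\in W$. Finally, from $1\in W$ the identities $\lambda^{-m}L_m(t^k)=(t+m\alpha)(t+m)^k$ let me peel off $t,t^2,\dots$ one degree at a time, giving $W=\C[t]$; hence $\Omega(\lambda,\alpha)$ is simple.

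For (3), the map $\phi(f(t))=tf(t)$ is a linear bijection onto $t\C[t]$ intertwining the two actions, since both $\phi(L_mf)$ (computed in $\Omega(\lambda,1)$) and $L_m\phi(f)$ (computed in $\Omega(\lambda,0)$) equal $\lambda^mt(t+m)f(t+m)$; thus $t\Omega(\lambda,0)\cong\Omega(\lambda,1)$, which is simple by (2) and has codimension $1$. For uniqueness I would show it is the only proper nonzero submodule: given such a $W$, the intersection $W\cap t\C[t]$ is a submodule of the simple module $t\C[t]$, and it cannot be $0$ (otherwise $W$ embeds in the one-dimensional $\C[t]/t\C[t]$, forcing $W=\C f$, impossible since $\deg L_mf>\deg f$); hence $t\C[t]\subseteq W$, and if $W$ also met the constants we would obtain $1\in W$ and $W=\C[t]$, so $W=t\C[t]$.

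Part (4) is the substantive one. Given a module $M$ free of rank $1$ over $\mathcal{U}(\C L_0)=\C[L_0]$, I identify $M=\C[t]$ with $L_0$ acting as multiplication by $t$. The relation $[L_0,L_m]=-mL_m$ translates into $L_m(tf)=(t+m)L_mf$, which by induction on $\deg f$ forces $L_mf(t)=a_m(t)f(t+m)$ with $a_m(t):=L_m\cdot1\in\C[t]$ and $a_0(t)=t$. Substituting into $[L_m,L_n]=(m-n)L_{m+n}$ yields
\begin{equation*}
a_m(t)a_n(t+m)-a_n(t)a_m(t+n)=(m-n)a_{m+n}(t),\qquad m,n\in\Z.
\end{equation*}
The main obstacle is to show every $a_m$ is affine-linear. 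First, $a_1,a_{-1}\neq0$ (else the case $m=1,n=-1$ reads $0=2t$). Writing $d=\deg a_1$, $e=\deg a_{-1}$, I compare coefficients in the specializations $n=\pm1$: the leading terms always cancel, and the next coefficient of the $m=1,n=-1$ instance equals $p_1p_{-1}(d+e)\,t^{\,d+e-1}$, which must match $2t$, forcing $d+e=2$. The case $d=0$ is excluded via $[L_2,L_{-1}]=3L_1$: its right side has degree $0$, while the left has degree $\deg a_2+1\ge1$ with nonvanishing top coefficient unless $a_2=0$, and $a_2=0$ would give $3a_1=0$, a contradiction; hence $d=e=1$, and the same comparisons propagate $\deg a_m=1$ to all $m$ (the singular indices $m=\pm1$ in the recursions handled by $[L_{\pm2},L_{\mp1}]$). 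Once each $a_m(t)=c_mt+f_m$ is linear, matching coefficients in the functional equation gives $c_{m+n}=c_mc_n$, so $c_m=\lambda^m$ with $\lambda=c_1\in\C^*$; and setting $f_m=\lambda^m g_m$ reduces the constant-term equation to $mg_m-ng_n=(m-n)g_{m+n}$ with $g_0=0$, whose solution is $g_m=m\alpha$ for $\alpha:=g_1$. Therefore $a_m(t)=\lambda^m(t+m\alpha)$ and $M\cong\Omega(\lambda,\alpha)$, completing (4).
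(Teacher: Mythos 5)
Your proof is correct, but note that the paper itself gives no proof of this theorem: it is stated as a known result imported verbatim from the cited references (L\"u--Zhao and Tan--Zhao), so there is no in-paper argument to compare against. What you have written is essentially a self-contained reconstruction of the standard proof from those references: the bracket check for (1), the Vandermonde extraction of the constant $\alpha$ from $(t+m\alpha)f(t+m)$ for simplicity in (2), the explicit intertwiner $f\mapsto tf$ plus a quotient argument for (3), and for (4) the reduction to the functional equation $a_m(t)a_n(t+m)-a_n(t)a_m(t+n)=(m-n)a_{m+n}(t)$ followed by degree bookkeeping to force $a_m$ affine-linear. All the key steps check out; the places where you compress detail are genuinely recoverable: the relation $a_m(t)a_{-m}(t+m)-a_{-m}(t)a_m(t-m)=2mt$ gives $a_m\neq0$ and $\deg a_m+\deg a_{-m}=2$ for every $m\neq0$ (not just $m=1$), the brackets $[L_m,L_{-1}]=(m+1)L_{m-1}$ and $[L_m,L_1]=(m-1)L_{m+1}$ do propagate $\deg a_m=1$ with nonvanishing top coefficient $P_mP_{\mp1}(m\pm\deg a_m)$ in the relevant ranges, and the multiplicativity $c_{m+n}=c_mc_n$ (valid only for $m\neq n$ from your coefficient comparison) still pins down $c_m=\lambda^m$ via $c_{-1}=c_1^{-1}$ and $c_2=c_1/c_{-1}$. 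If this proof were to be included, those three points are the ones worth writing out explicitly.
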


\subsection{$\mathcal{U}(\mathfrak{h})$-modules}
Assume that $V=\C[\partial^2]\oplus \partial\C[\partial^2]$. Then $V$ is a $\Z_2$-graded vector space with $V_{\bar 0}=\C[\partial^2]$ and
$V_{\bar 1}=\partial\C[\partial^2]$. Now we  give an precise construction of an $\T$-module structure
on $V$.
\begin{prop}\label{pro2.21}
For $\lambda\in \C^*,\alpha\in\C,t=\pm1,f(\partial^2)\in\C[\partial^2]$ and $\partial f(\partial^2)\in \partial\C[\partial^2]$,
we define the following  action
of  twisted  $N=2$ superconformal algebra  on $V$ as follows
\begin{eqnarray}\label{2.22}
&&L_mf(\partial^2)=\lambda^m(\partial^2+m\alpha)f(\partial^2+m),
\\&&\label{2.33} L_{m}\partial f(\partial^2)=\lambda^m(\partial^2+m(\alpha+\frac{1}{2}))\partial f(\partial^2+m),
\\&& \label{2.44} I_rf(\partial^2)=-t^{2r}2\lambda^{r}\alpha f(\partial^2+r),
\\&& \label{2.55}  I_{r}\partial f(\partial^2)=t^{2r}\lambda^{r}(1-2\alpha)\partial f(\partial^2+r),
\\&& \label{2.66} G_pf(\partial^2)=t^{2p}\lambda^p\partial f(\partial^2+p),
\\&& \label{2.77} G_p\partial f(\partial^2)=(-t)^{2p}\lambda^p(\partial^2+2p\alpha)f(\partial^2+p)
\end{eqnarray}
for $m\in\Z,r\in  \frac{1}{2}+\Z, p\in\frac{1}{2}\Z.$
 Then $V$ is an  $\T$-module under the action of \eqref{2.22}-\eqref{2.77}, which
  is a
free of rank $1$ as a $\mathcal{U}(\mathfrak{h})$-module  and   denoted by $\mathcal{M}_t(\lambda,\alpha)$.
\end{prop}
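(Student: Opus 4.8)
The plan is to prove the two assertions in turn: first that formulas \eqref{2.22}--\eqref{2.77} really define an $\T$-module structure on $V$, and then that this module is free of rank $1$ over $\mathcal{U}(\mathfrak{h})$. For the first part I must check that every non-trivial super bracket in \eqref{def1.1} is realized as the corresponding operator (super)commutator on $V$, and since $V=V_{\bar 0}\oplus V_{\bar 1}$ each relation has to be tested separately on $f(\partial^2)\in\C[\partial^2]$ and on $\partial f(\partial^2)\in\partial\C[\partial^2]$.

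The relation $[L_m,L_n]=(m-n)L_{m+n}$ comes essentially for free. Comparing \eqref{2.22} with the Witt action of Theorem \ref{le2.1}, the $L_m$ act on $V_{\bar 0}$ exactly as on $\Omega(\lambda,\alpha)$, with $\partial^2$ playing the role of $t$; and by \eqref{2.33} they act on $V_{\bar 1}$ as on $\Omega(\lambda,\alpha+\frac{1}{2})$. By Theorem \ref{le2.1}(1) both restrictions are already Witt-modules, so this relation holds on all of $V$. The relations $[L_m,I_r]=-rI_{m+r}$, $[L_m,G_p]=(\frac{m}{2}-p)G_{m+p}$ and $[I_r,G_p]=G_{r+p}$ are then verified by direct substitution. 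For example, on $V_{\bar 0}$ one computes $I_rG_pf-G_pI_rf$ from \eqref{2.44}, \eqref{2.55} and \eqref{2.66} and finds that the two terms recombine through the scalar identity $(1-2\alpha)+2\alpha=1$, yielding precisely $G_{r+p}f=t^{2(r+p)}\lambda^{r+p}\partial f(\partial^2+r+p)$; the computation on $V_{\bar 1}$ is symmetric.

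The delicate point, and the step I expect to be the main obstacle, is the fermionic bracket $[G_p,G_q]$: the $G$'s are odd, so this is an \emph{anticommutator} $G_pG_q+G_qG_p$, and the target bifurcates according to the parity of $p+q$. Here one must track the scalar factors $t^{2p}$, $(-t)^{2p}$ and $(-1)^{2p}$ carefully, since for integral $p$ they all equal $1$ while for half-integral $p$ they equal $t$, $-t$ and $-1$ respectively; it is exactly this behaviour that distinguishes the two branches $p+q\in\Z$ and $p+q\in\frac{1}{2}+\Z$. Evaluating $G_pG_q+G_qG_p$ on $f(\partial^2)$ and on $\partial f(\partial^2)$ and matching the result against $(-1)^{2p}2L_{p+q}$ in the first case and against $(-1)^{2p+1}(p-q)I_{p+q}$ in the second—using the explicit $L$- and $I$-actions \eqref{2.22}--\eqref{2.55}—completes the verification of the module axioms.

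For the freeness claim I specialize the Cartan action to index $0$. By \eqref{2.22}, \eqref{2.33} the operator $L_0$ is multiplication by $\partial^2$, while \eqref{2.66}, \eqref{2.77} show $G_0$ sends $f(\partial^2)\mapsto\partial f(\partial^2)$ and $\partial f(\partial^2)\mapsto\partial^2 f(\partial^2)$; that is, $G_0$ is multiplication by $\partial$ and $G_0^2=L_0$. Since $[L_0,G_0]=0$ and $[G_0,G_0]=2L_0$, the enveloping algebra is $\mathcal{U}(\mathfrak{h})=\C[G_0]$ with $L_0=G_0^2$. Applying powers of $G_0$ to the vector $1\in V_{\bar 0}$ gives $G_0^n\cdot 1=\partial^n$ for all $n\geq0$, and these monomials form a basis of $V\cong\C[\partial]$. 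Hence $V$ is a free $\mathcal{U}(\mathfrak{h})$-module of rank $1$ generated by $1$, denoted $\mathcal{M}_t(\lambda,\alpha)$. \QED
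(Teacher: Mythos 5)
Your proposal is correct and follows essentially the same route as the paper: a direct verification of each superbracket on the two graded pieces (with the Virasoro relation delegated to Theorem \ref{le2.1}(1) by viewing $V_{\bar 0}$ and $V_{\bar 1}$ as $\Omega(\lambda,\alpha)$ and $\Omega(\lambda,\alpha+\frac{1}{2})$), followed by the observation that $L_0$ and $G_0$ act as multiplication by $\partial^2$ and $\partial$, so that $V=\mathcal{U}(\mathfrak{h})\mathbf{1}$ freely. The only relation you omit is $[I_r,I_s]=0$, which the paper does check explicitly but which is immediate since each $I_r$ acts on each graded piece as a scalar multiple of a shift operator; your freeness argument is in fact spelled out more carefully than the paper's.
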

\begin{proof}
According to Theorem \ref{le2.1} (1),  one can see that
\begin{eqnarray*}
&&L_mL_nf(\partial^2)-L_nL_mf(\partial^2)=(m-n)L_{m+n}f(\partial^2),
\\&&L_mL_n\partial f(\partial^2)-L_nL_m\partial f(\partial^2)=(m-n)L_{m+n}\partial f(\partial^2),
\end{eqnarray*}
where $m,n \in\Z$.

For any $m\in\Z,r\in  \frac{1}{2}+\Z$,  from \eqref{2.22}-\eqref{2.55},   we respectively check
\begin{eqnarray*}
&&L_mI_rf(\partial^2)-I_rL_mf(\partial^2)
\\&=&L_m\big(-2t^{2r}\lambda^{r}\alpha f(\partial^2+r)\big)-I_r\big(\lambda^m(\partial^2+m\alpha)f(\partial^2+m)\big)
\\&=&-2t^{2r}\lambda^{m+r}\alpha (\partial^2+m\alpha)f(\partial^2+m+r)+2t^{2r}\lambda^{m+r}\alpha (\partial^2+r+m\alpha)f(\partial^2+m+r)
\\&=&2t^{2(m+r)}\lambda^{m+r}\alpha rf(\partial^2+m+r)=-rI_{m+r}f(\partial^2)
\end{eqnarray*}
and
\begin{eqnarray*}
&&L_mI_r\partial f(\partial^2)-I_rL_m\partial f(\partial^2)
\\&=&L_m\big(t^{2r}\lambda^{r}\partial (1-2\alpha) f(\partial^2+r)\big)-I_r\big(\lambda^m\partial\big(\partial+m(\alpha+\frac{1}{2})\big) f(\partial^2+m)\big)
\\&=&t^{2r}\lambda^{m+r}\partial(1-2\alpha ) \big(\partial^2+m(\alpha+\frac{1}{2})\big)f(\partial^2+m+r)
\\&&-t^{2r}\lambda^{m+r}\partial(1-2\alpha ) \big(\partial^2+r+m(\alpha+\frac{1}{2})\big)f(\partial^2+m+r)
\\&=&-t^{2(m+r)}\lambda^{m+r}(1-2\alpha) r\partial f(\partial^2+m+r)=-rI_{m+r}\partial f(\partial^2).
\end{eqnarray*}
For any $r,s\in  \frac{1}{2}+\Z$,  by \eqref{2.44} and \eqref{2.55},  it is straightforward to verify that
\begin{eqnarray*}
&&I_rI_sf(\partial^2)-I_sI_rf(\partial^2)
\\&=&I_r(-2t^{2s}\lambda^{s}\alpha f(\partial^2+s))-I_s(-2t^{2r}\lambda^{r}\alpha f(\partial^2+r))=0,
\\&&I_rI_s\partial f(\partial^2)-I_sI_r\partial f(\partial^2)
\\&=&
I_r(t^{2s}\lambda^{s}\partial(1-2\alpha) f(\partial^2+s))-I_s(t^{2r}\lambda^{r}\partial(1-2\alpha) f(\partial^2+r))=0.
\end{eqnarray*}
Fixing $r\in\frac{1}{2}+\Z$, then $2r$ is an odd number.
For any $r\in  \frac{1}{2}+\Z,p\in\frac{1}{2}\Z$,  using \eqref{2.44}-\eqref{2.77},   we obtain
\begin{eqnarray*}
&&I_rG_pf(\partial^2)-G_pI_rf(\partial^2)
\\&=&I_r(t^{2p}\lambda^p\partial f(\partial^2+p))-G_p(-2t^{2r}\lambda^{r}\alpha f(\partial^2+r))
\\&=&t^{2(r+p)}\lambda^{r+p}\partial(1-2\alpha) f(\partial^2+r+p)+2\alpha  t^{2(r+p)}\lambda^{r+p}\partial f(\partial^2+r+p)
\\&=&t^{2(r+p)}\lambda^{r+p}\partial f(\partial^2+r+p)=G_{r+p}f(\partial^2)
\end{eqnarray*}
and
\begin{eqnarray*}
&&I_rG_p\partial f(\partial^2)-G_pI_r\partial f(\partial^2)
\\&=&I_r\big((-t)^{2p}\lambda^p(\partial^2+2p\alpha)f(\partial^2+p)\big)-G_p\big(t^{2r}\lambda^{r}\partial(1-2\alpha) f(\partial^2+r)\big)
\\&=&(-1)^{2p}t^{2(r+p)}\lambda^{r+p}(-2\alpha)(\partial^2+r+2p\alpha) f(\partial^2+r+p)
\\&&-(-1)^{2p}t^{2(r+p)}\lambda^{r+p}(\partial^2+2p\alpha)(1-2\alpha) f(\partial^2+r+p)
\\&=&(-t)^{2(p+r)}\lambda^{r+p}(\partial^2+2(p+r)\alpha) f(\partial^2+r+p)=G_{r+p}\partial f(\partial^2).
\end{eqnarray*}
Moreover, for $p,q\in\frac{1}{2}\Z$, it follows from  \eqref{2.22}-\eqref{2.77} that we have
\begin{eqnarray*}
&&G_pG_q f(\partial^2)+G_qG_p f(\partial^2)
\\&=&G_p(\lambda^q\partial f(\partial^2+q))+G_q(\lambda^p\partial f(\partial^2+p))
\\&=&(-1)^{2p}\Big(t^{2(p+q)}\lambda^{p+q}(\partial^2+2p\alpha) f(\partial^2+p+q)
\\&&+(-t)^{2(p+q)}\lambda^{p+q}(\partial^2+2q\alpha) f(\partial^2+p+q)\Big)
\\&=& \left\{\begin{array}{llll}
(-1)^{2p}2 \lambda^{p+q}\big(\partial^2+(p+q)\alpha\big) f(\partial^2+p+q)&\mbox{if}\
p+q\in\Z,\\[4pt]
(-1)^{2p+1}t^{2(p+q)}\lambda^{p+q}(p-q)(-2\alpha) f(\partial^2+p+q)&\mbox{if}\
p+q\in\frac{1}{2}+\Z,
\end{array}\right.
\\&=&
\left\{\begin{array}{llll}
(-1)^{2p}2L_{p+q}f(\partial^2)&\mbox{if}\
p+q\in\Z,\\[4pt]
(-1)^{2p+1}(p-q)I_{p+q}f(\partial^2)&\mbox{if}\
p+q\in\frac{1}{2}+\Z,
\end{array}\right.
\end{eqnarray*}
and
\begin{eqnarray*}
&&G_pG_q \partial f(\partial^2)+G_qG_p \partial f(\partial^2)
\\&=&G_p\big((-t)^{2q}\lambda^q(\partial^2+2q\alpha)f(\partial^2+q)\big)
+G_q\big((-t)^{2p}\lambda^p(\partial^2+2p\alpha)f(\partial^2+p)\big)
\\&=&(-1)^{2p}\Big((-t)^{2(p+q)}\lambda^{p+q}\partial(\partial^2+p+2q\alpha)f(\partial^2+p+q)
\\&&+t^{2(p+q)}\lambda^{p+q}\partial(\partial^2+q+2p\alpha)f(\partial^2+p+q)\Big)
\\&=& \left\{\begin{array}{llll}
(-1)^{2p}2\Big( \lambda^{p+q}\partial\big(\partial^2+(p+q)(\alpha+\frac{1}{2})\big) f(\partial^2+p+q)\Big)&\mbox{if}\
p+q\in\Z,\\[4pt]
(-1)^{2p+1}t^{2(p+q)}\lambda^{p+q}(p-q)\partial(1-2\alpha)  f(\partial^2+p+q)&\mbox{if}\
p+q\in\frac{1}{2}+\Z,
\end{array}\right.
\\&=&
\left\{\begin{array}{llll}
(-1)^{2p}\big(2L_{p+q}\partial f(\partial^2)\big)&\mbox{if}\
p+q\in\Z,\\[4pt]
(-1)^{2p+1}(p-q)I_{p+q}\partial f(\partial^2)&\mbox{if}\
p+q\in\frac{1}{2}+\Z.
\end{array}\right.
\end{eqnarray*}

Finally, for any $m\in\Z,p\in\frac{1}{2}\Z$, by the similar computation, we obtain
\begin{eqnarray*}
&&L_mG_pf(\partial^2)-G_pL_mf(\partial^2)
\\&=&L_m(t^{2p}\lambda^p\partial f(\partial^2+p))-G_p(\lambda^m(\partial^2+m\alpha)f(\partial^2+m))
\\&=&(\frac{m}{2}-p)t^{2(m+p)}\lambda^{m+p}\partial f(\partial^2+m+p))=(\frac{m}{2}-p)G_{m+p}f(\partial^2)
\end{eqnarray*}
and
\begin{eqnarray*}
&&L_mG_p\partial f(\partial^2)-G_pL_m\partial f(\partial^2)
\\&=&L_m\big((-t)^{2p}\lambda^p(\partial^2+2p\alpha)f(\partial^2+p)\big)-G_p\big(\lambda^m\partial(\partial^2+m(\alpha+\frac{1}{2}))f(\partial^2+m)\big)
\\&=&(-t)^{2p}\lambda^{m+p}(\partial^2+m\alpha)(\partial^2+m+2p\alpha)f(\partial^2+m+p))
\\&&-(-t)^{2p}\lambda^{m+p}(\partial^2+2p\alpha)(\partial^2+p+m(\alpha+\frac{1}{2}))f(\partial^2+m+p)
\\&=&(\frac{m}{2}-p)(-t)^{2p}\lambda^{m+p}(\partial^2+2\alpha(p+m)) f(\partial^2+m+p)
\\&=&(\frac{m}{2}-p)G_{m+p}\partial f(\partial^2).
\end{eqnarray*}
 This completes the proof.
\end{proof}
\begin{rema}
By the  definition of $\T$-module $\mathcal{M}_t(\lambda,\alpha)$ for $t=\pm1$, we can define a class of Witt modules
$\Omega_b(\lambda,\mu,\alpha)=\C[x]$ with the following action:
$$\mathbb{L}_{m}f(x)=\lambda^m(x+m\alpha)f(x+m)-\mu t^{2m-1}\lambda^{m-\frac{1}{2}} f(x+m-\frac{1}{2}),$$
where $m\in\Z,\lambda,\mu\in\C^*,\alpha\in\C$.
\end{rema}

The   simplicities and isomorphism classes of $\T$-modules $\mathcal{M}_t(\lambda,\alpha)$  are presented separately  in the following two theorems.
\begin{theo}\label{th2.3}
Let $\lambda\in\C^*,\alpha\in\C,t=\pm1$, $\Upsilon_t=\partial^2(\mathcal{M}_t(\lambda,0)_{\bar 0})\oplus\mathcal{M}_t(\lambda,0)_{\bar 1}$.
Then   \begin{itemize}
\item[\rm(1)]  $\mathcal{M}_t(\lambda,\alpha)$ is simple if and only if  $\alpha\neq0$. Furthermore, $\mathcal{M}_t(\lambda,0)$ has a unique  proper submodule $\Upsilon_t$, and $\mathcal{M}_t(\lambda,0)/\Upsilon_t$  is a one-dimensional trivial
$\T$-module;
\item[\rm(2)]  $\Upsilon_1\cong\Pi(\mathcal{M}_{-1}(\lambda,\frac{1}{2}))$ and  $\Upsilon_{-1}\cong\Pi(\mathcal{M}_1(\lambda,\frac{1}{2}))$  are simple $\T$-modules.
\end{itemize}
  \end{theo}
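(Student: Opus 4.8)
The plan is to reduce everything to the Witt-module $\Omega(\lambda,\alpha)$ of Theorem \ref{le2.1} and to exploit the single operator $G_0$, which intertwines the even and odd parts transparently. First I would record the two structural facts that drive the argument. Restricting the action to the Witt subalgebra $\mathcal{V}=\mathrm{span}_\C\{L_m\}$ and identifying $\partial^2\leftrightarrow t$, formula \eqref{2.22} shows $\mathcal{M}_t(\lambda,\alpha)_{\bar 0}=\C[\partial^2]\cong\Omega(\lambda,\alpha)$, while \eqref{2.33} (stripping the factor $\partial$) shows $\mathcal{M}_t(\lambda,\alpha)_{\bar 1}=\partial\C[\partial^2]\cong\Omega(\lambda,\alpha+\frac12)$. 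Secondly, putting $p=0$ in \eqref{2.66} and \eqref{2.77} gives $G_0 f(\partial^2)=\partial f(\partial^2)$ and $G_0\,\partial f(\partial^2)=\partial^2 f(\partial^2)$ for \emph{every} $\alpha$; thus $G_0$ carries $\mathcal{M}_t(\lambda,\alpha)_{\bar 0}$ bijectively onto $\mathcal{M}_t(\lambda,\alpha)_{\bar 1}$ and annihilates no nonzero homogeneous element.

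For the simplicity statement in (1) I would work in the category of $\Z_2$-graded modules, so that any submodule decomposes as $W=W_{\bar 0}\oplus W_{\bar 1}$ with $W_{\bar i}$ a $\mathcal{V}$-submodule of $\mathcal{M}_t(\lambda,\alpha)_{\bar i}$. Assume $\alpha\neq 0$ and $W\neq 0$. If $W_{\bar 0}\neq 0$, then $\Omega(\lambda,\alpha)$ is simple by Theorem \ref{le2.1}(2), so $W_{\bar 0}=\C[\partial^2]$, and then $W_{\bar 1}\supseteq G_0\,\C[\partial^2]=\partial\C[\partial^2]$, whence $W$ is everything. If instead $W_{\bar 0}=0$, choose $0\neq w\in W_{\bar 1}$; then $G_0 w\in W_{\bar 0}=0$, contradicting that $G_0$ kills no nonzero element. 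Hence $\mathcal{M}_t(\lambda,\alpha)$ is simple. Conversely, for $\alpha=0$ a direct inspection of \eqref{2.22}--\eqref{2.77} shows every generator sends all of $V$ into $\partial^2\C[\partial^2]\oplus\partial\C[\partial^2]=\Upsilon_t$ (no generator produces a nonzero constant), so $\Upsilon_t$ is a proper submodule, $\mathcal{M}_t(\lambda,0)/\Upsilon_t$ is one-dimensional and trivial, and the module is not simple. Uniqueness of $\Upsilon_t$ comes from the same graded analysis: when $\alpha=0$ one has $\mathcal{M}_t(\lambda,0)_{\bar 1}\cong\Omega(\lambda,\frac12)$ simple and $\mathcal{M}_t(\lambda,0)_{\bar 0}\cong\Omega(\lambda,0)$ with unique proper submodule $\partial^2\C[\partial^2]$ by Theorem \ref{le2.1}(3). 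Since $G_0$ is injective on the even part, a nonzero $W$ must have $W_{\bar 1}\neq 0$, hence $W_{\bar 1}=\partial\C[\partial^2]$ by simplicity; then $W_{\bar 0}\supseteq G_0\,\partial\C[\partial^2]=\partial^2\C[\partial^2]$, and properness forces $W_{\bar 0}=\partial^2\C[\partial^2]$, i.e. $W=\Upsilon_t$.

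For (2) the task is to exhibit an explicit isomorphism $\phi\colon\Pi(\mathcal{M}_{-t}(\lambda,\frac12))\to\Upsilon_t$; since $\mathcal{M}_{-t}(\lambda,\frac12)$ is simple by part (1) and $\Pi$ preserves simplicity, this simultaneously identifies $\Upsilon_t$ and proves it simple. The natural candidate is ``multiplication by $\partial$'': on the even (respectively odd) part of $\Pi(\mathcal{M}_{-t}(\lambda,\frac12))$, namely $\partial\C[\partial^2]$ (respectively $\C[\partial^2]$), set $\partial g(\partial^2)\mapsto\partial^2 g(\partial^2)$ and $g(\partial^2)\mapsto\partial g(\partial^2)$. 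This is a grading-preserving linear bijection onto $\partial^2\C[\partial^2]\oplus\partial\C[\partial^2]=\Upsilon_t$, and I would verify $\phi(Xv)=X\phi(v)$ on each family of generators $L_m,I_r,G_p$ using \eqref{2.22}--\eqref{2.77}.

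The main obstacle I anticipate is the sign and scalar bookkeeping in the verification for (2): one must match the parameters $(-t,\frac12)$ of the source module against the parameters $(t,0)$ of $\Upsilon_t$, and the crucial cancellations hinge on the identities $(-(-t))^{2p}=t^{2p}$ for $p\in\frac12\Z$ and $(-t)^{2r}=-t^{2r}$ for $r\in\frac12+\Z$ (since $2r$ is odd), together with the vanishing of the factor $1-2\alpha$ at $\alpha=\frac12$ and of $\alpha$ at $\alpha=0$. These ensure, for instance, that the two $I_r$-actions (each annihilating exactly one homogeneous component) correspond correctly under $\phi$, and that the $G_p$-actions agree after the shift by $\partial$. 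Once these elementary but delicate identities are in place, the homomorphism property is routine, completing the proof.
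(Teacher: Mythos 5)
Your proposal is correct and follows essentially the same route as the paper: identify the homogeneous parts with the Witt modules $\Omega(\lambda,\alpha)$ and $\Omega(\lambda,\alpha+\tfrac12)$, use $G_0$ (which is injective and swaps the parities) to transfer information between them, and exhibit the ``shift by $\partial$'' isomorphism for part (2) — your map is just the inverse of the paper's $\chi$. If anything, your write-up is slightly more explicit than the paper's in the $\alpha\neq 0$ case and in the uniqueness argument for $\Upsilon_t$.
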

\begin{proof}
(1)
Assume that $P=P_{\bar0}\oplus P_{\bar1}$ is a nonzero submodule of $\M_t(\lambda,\alpha)$. By  \eqref{2.66} and  \eqref{2.77},
we get that   $P_{\bar0}$ and $P_{\bar1}$ are both nonzero.
If we prove $P_{\bar0}=\M_t(\lambda,\alpha)_{\bar0}$,  the simplicities of
 $\M_t(\lambda,\alpha)$ will be determined.

We have the following two cases.
\begin{case}
$\alpha=0.$
\end{case}
 It is easy to check that  $\Upsilon_t$ is a proper submodule
of $\M_t(\lambda,\alpha)$ for $\lambda\in\C^*,\alpha\in\C$.
According to  Theorem \ref{le2.1} (3), we see that   $P_{\bar0}=\partial^2(\mathcal{M}_t(\lambda,\alpha)_{\bar 0})$. Based on   \eqref{2.66},
we observe  $P_{\bar1}=\mathcal{M}_t(\lambda,\alpha)_{\bar 1}$.
Therefore,   $P=\Upsilon_t$ and  $\mathcal{M}_t(\lambda,0)/\Upsilon_t$ is a $1$-dimensional trivial $\T$-module.

\begin{case}
$\alpha\neq0.$
\end{case}
The simple Witt modules can be extended to the simple $\T_{\bar 0}$-modules.
Then from Theorem \ref{le2.1} (2),   we can conclude  that $\mathcal{M}_t(\lambda,\alpha)$
is a simple $\T$-module.

(2)
Define the following linear   map
 \begin{eqnarray*}
\chi:\quad\Upsilon_1\quad&\longrightarrow& \Pi(\mathcal{M}_{-1}(\lambda,\frac{1}{2}))
\\ \partial^2f(\partial^2)&\longmapsto& \partial f(\partial^2)
\\ \partial f(\partial^2)&\longmapsto& f(\partial^2).
\end{eqnarray*}
For $r\in\Z+\frac{1}{2},p\in\frac{1}{2}\Z$, we check that
\begin{eqnarray*}
&&\chi(G_p\partial f(\partial^2))=(-1)^{2p}\lambda^p \partial f(\partial^2+p)=G_p\chi(\partial f(\partial^2)),
\\&&\chi(G_p\partial^2f(\partial^2))=\lambda^p(\partial^2+p)f(\partial^2+p)=G_p\chi(\partial^2f(\partial^2)),
\\&&\chi(I_r\partial f(\partial^2))=\lambda^r  f(\partial^2+r)=I_r\chi(\partial f(\partial^2)),
\\&&\chi(I_r\partial^2f(\partial^2))=I_r\chi(\partial^2f(\partial^2))=0.
\end{eqnarray*}
By an identical process, one can confirm that $$
\chi(L_m\partial f(\partial^2))=L_m\chi(\partial f(\partial^2)),
\chi(L_m\partial^2f(\partial^2))=L_m\chi(\partial^2f(\partial^2)).
$$
Hence, $\chi$ is an $\T$-module isomorphism, and $\Upsilon_1$ is a simple submodule of $\T$. Using the similar arguments, we know that $\Upsilon_{-1}\cong\Pi(\mathcal{M}_1(\lambda,\frac{1}{2}))$  is also a simple $\T$-module.
\end{proof}
\begin{rema}
Using Theorem \ref{th2.3}, we have the   short exact sequence
\begin{eqnarray*}
0\rightarrow\Upsilon_t
\stackrel{\xi_1}{\longrightarrow}
\mathcal{M}_t(\lambda,0)\stackrel{\xi_2}{\longrightarrow}\C\rightarrow0,
\end{eqnarray*}
where $\xi_1$ and $\xi_2$  are respectively the embedding mapping  and canonical mapping  defined by
$$\xi_1:f(\partial^2)+\partial g(\partial^2)\rightarrow \partial^2f(\partial^2)+\partial g(\partial^2),\ \xi_2:f(\partial^2)+\partial g(\partial^2)\rightarrow \overline{f(\partial^2)+\partial g(\partial^2)}.$$
\end{rema}


\begin{theo}\label{th2ere.4}
 Let $\lambda,\mu\in\C^*,\alpha,\beta\in\C,t,t^\prime=\pm1$.
We obtain the following statements.
 \begin{itemize}
\item[\rm(i)] $\Pi\big(\mathcal{M}_t(\lambda,\alpha)\big)\ncong\mathcal{M}_{t^\prime}(\mu^\prime,\beta^\prime)$ for any $\mu^\prime,\beta^\prime\in\C$;

\item[\rm(ii)] $\mathcal{M}_t(\lambda,\alpha)\cong\mathcal{M}_{t^\prime}(\mu,\beta)$ if and only if $\lambda=\mu$, $\alpha=\beta$ and $t=t^\prime$.
\end{itemize}
\end{theo}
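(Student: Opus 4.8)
The plan is to exploit the fact that the actions of the two Cartan generators $L_0$ and $G_0$ are \emph{independent} of the parameters $\lambda,\alpha,t$. Setting $m=0$ in \eqref{2.22}--\eqref{2.33} shows that $L_0$ acts as multiplication by $\partial^2$ on both $V_{\bar0}=\C[\partial^2]$ and $V_{\bar1}=\partial\C[\partial^2]$, while setting $p=0$ in \eqref{2.66}--\eqref{2.77} shows that $G_0$ acts by $f(\partial^2)\mapsto\partial f(\partial^2)$ and $\partial f(\partial^2)\mapsto\partial^2 f(\partial^2)$. Since a $\T$-module isomorphism is even and in particular commutes with $\mathcal{U}(\mathfrak{h})$, these two parameter-free actions will carry most of the weight.

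For the nontrivial (''only if'') direction of (ii), let $\phi\colon\mathcal{M}_t(\lambda,\alpha)\to\mathcal{M}_{t^\prime}(\mu,\beta)$ be an even isomorphism. First, commuting with $L_0$ forces $\phi(\partial^{2k})=\partial^{2k}\phi(1)$ on the even part, so $\phi$ restricted to $\C[\partial^2]$ is multiplication by $h:=\phi(1)\in\C[\partial^2]$; bijectivity forces $h$ to be a unit, i.e. $\phi(1)=c$ for some $c\in\C^*$. Next, commuting with $G_0$ gives $\phi(\partial)=\phi(G_0\cdot1)=G_0(c)=c\partial$, and a second use of $L_0$ propagates this to $\phi(\partial f(\partial^2))=c\,\partial f(\partial^2)$; hence $\phi=c\,\mathrm{id}$ on all of $V$. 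I would then feed the remaining generators through this rigid $\phi$: applying it to $L_m\cdot1=\lambda^m(\partial^2+m\alpha)$ gives $c\lambda^m(\partial^2+m\alpha)$, to be compared with $L_m\phi(1)=c\mu^m(\partial^2+m\beta)$, whence $\lambda^m(\partial^2+m\alpha)=\mu^m(\partial^2+m\beta)$ for all $m$, and the case $m=1$ yields $\lambda=\mu$ and $\alpha=\beta$. Finally, for $p\in\tfrac12+\Z$ one has $t^{2p}=t$, so applying $\phi$ to $G_p\cdot1=t\lambda^p\partial$ gives $ct\lambda^p\partial$, while $G_p\phi(1)=ct^\prime\mu^p\partial$; as $\mu=\lambda$ this forces $t=t^\prime$. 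The ''if'' direction is immediate, since the defining formulas \eqref{2.22}--\eqref{2.77} then coincide and the identity is an isomorphism.

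For (i) I would isolate an invariant that survives \emph{all} choices of $\mu^\prime,\beta^\prime$. To this end, attach to any $\T$-module $X=X_{\bar0}\oplus X_{\bar1}$ the space $N(X):=X_{\bar0}/(G_0\cdot X_{\bar1})$, which is preserved by even isomorphisms because they commute with $G_0$ and respect the grading. Using only the parameter-free $p=0$ action of $G_0$: in $\mathcal{M}_{t^\prime}(\mu^\prime,\beta^\prime)$ one has $G_0\cdot X_{\bar1}=\partial^2\C[\partial^2]$, a proper subspace of $X_{\bar0}=\C[\partial^2]$ missing the constants, so $N(X)\cong\C\neq0$; whereas in $\Pi(\mathcal{M}_t(\lambda,\alpha))$ the even part is $\partial\C[\partial^2]$ and $G_0$ carries the odd part $\C[\partial^2]$ onto all of $\partial\C[\partial^2]$, so $N=0$. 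Since the invariants differ, no even isomorphism can exist.

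The main obstacle is part (i): the difficulty is conceptual rather than computational, namely finding a single invariant robust against every parameter $\mu^\prime,\beta^\prime$ and against the sign $t^\prime$. The surjectivity-versus-nonsurjectivity of $G_0$ on the odd part works precisely because the $p=0$ action is parameter-independent, so nothing about the target module need be assumed beyond its grading. In (ii) the only mild subtlety is securing $t=t^\prime$ when $\alpha\in\{0,\tfrac12\}$, where the $I_r$-coefficients $-2t^{2r}\lambda^r\alpha$ and $t^{2r}\lambda^r(1-2\alpha)$ degenerate; routing the final comparison through the half-integer modes $G_p$, whose leading coefficient $t\lambda^p$ never vanishes, avoids any case analysis.
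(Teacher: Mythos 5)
Your proof is correct, and it diverges from the paper's in two useful ways. For part (ii), the paper obtains $\lambda=\mu$ and $\alpha=\beta$ by viewing the even parts as Witt modules $\Omega(\lambda,\alpha)$ and citing Theorem 12 of [LZ]; you instead pin down the isomorphism completely from the parameter-free actions of $L_0$ and $G_0$ (forcing $\phi=c\,\mathrm{id}$) and then read off all three parameters by comparing $L_1$ and the half-integer $G_p$ on the generator. This is self-contained and, as you note, your use of $G_p$ with $2p$ odd cleanly handles the degenerate values $\alpha\in\{0,\tfrac12\}$ where the $I_r$-coefficients vanish; the final step $t=t'$ is essentially the same as the paper's. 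For part (i), the paper writes $\psi(\mathbf{1})=e_1\partial\mathbf{1}'$, $\psi(\partial\mathbf{1})=e_2\mathbf{1}'$ with $e_1,e_2\in\C^*$ (a normalization it does not fully justify) and derives a contradiction from a degree mismatch in $G_p\psi(\mathbf{1})$ versus $\psi(G_p\mathbf{1})$; your cokernel invariant $N(X)=X_{\bar0}/(G_0X_{\bar1})$ packages the same structural obstruction — $G_0$ is onto the even part of $\Pi(\mathcal{M}_t(\lambda,\alpha))$ but has one-dimensional cokernel $\C[\partial^2]/\partial^2\C[\partial^2]$ in any $\mathcal{M}_{t'}(\mu',\beta')$ — without needing to normalize the image of the generator at all, and is arguably the cleaner argument. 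Both of your arguments, like the paper's, implicitly use that module isomorphisms are even (parity-preserving), which is the standing convention here since $\Pi$ is available; no gap there.
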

\begin{proof}
(i) Let
$$
\psi:\Pi\big(\mathcal{M}_t(\lambda,\alpha)\big)\longrightarrow\mathcal{M}_{t^\prime}(\mu^\prime,\beta^\prime)
$$ be an isomorphism. Assume that  $\mathbf{1}$, $\mathbf{1}^\prime$   are respectively the
generators of the free $\C[L_0,G_0]$-modules
$\mathcal{M}_t(\lambda,\alpha)$ and $\mathcal{M}_{t^\prime}(\mu^\prime,\beta^\prime).$
In fact, there exists some $e_1,e_2\in\C^*$ such that $\psi(\mathbf{1})=e_1\partial\mathbf{1}^\prime$
and $\psi(\partial\mathbf{1})=e_2\mathbf{1}^\prime$.
For $p\in\frac{1}{2}\Z$, we confirm that $$G_p\psi(\mathbf{1})=G_p\big(e_1\partial\mathbf{1}^\prime\big)
=(-t^\prime)^{2p}e_1(\mu^\prime)^p(\partial^2+2p\beta^\prime)\mathbf{1}^\prime
\ \mathrm{and} \  \psi(G_p\mathbf{1})=e_2t^{2p}\lambda^p\mathbf{1}^\prime.$$  Thus we have $G_p\psi(\mathbf{1})\neq\psi(G_p\mathbf{1})$,  which yields a contradiction.

(ii) The ``if" part is clear.
Now we consider the ``only if" part.
Suppose $\mathcal{M}_t(\lambda,\alpha)\cong\mathcal{M}_{t^\prime}(\mu,\beta)$  as $\T$-modules. It follows from (i)
that $\mathcal{M}_t(\lambda,\alpha)_{\bar 0}\cong\mathcal{M}_{t^\prime}(\mu,\beta)_{\bar 0}$ can be viewed as   $\mathcal{V}$-modules.  By Theorem 12 of \cite{LZ},  one has  $\lambda=\mu$ and $\alpha=\beta$.

Let
$$
\varphi:\mathcal{M}_t(\lambda,\alpha)\longrightarrow\mathcal{M}_{t^\prime}(\mu,\beta)
$$ be an isomorphism, and let  $\mathbf{1}$, $\mathbf{1}^\prime$   be the
generators of
$\mathcal{M}_t(\lambda,\alpha)$ and $\mathcal{M}_{t^\prime}(\mu,\beta).$
Then there exists some $d\in\C^*$ such that $\varphi(\mathbf{1})=d\mathbf{1}^\prime$ and $\varphi(\partial\mathbf{1})=d\partial\mathbf{1}^\prime$. For $p\in\frac{1}{2}\Z$, from $\varphi(G_p\mathbf{1})=G_p\varphi(\mathbf{1})$,
we deduce that $t=t^\prime$.
\end{proof}

\subsection{$\mathcal{U}(\mathfrak{t})$-modules}

Let $W=\C[x]\oplus\C[y]$ be an $\T$-module such that it is free of rank $2$ as a $\mathcal{U}(\C L_0)$-module
with two homogeneous basis elements $1_{\bar0}$ and $1_{\bar1}$. Then $W$ is a $\Z_2$-graded vector space with $W_{\bar 0}=\C[x]\mathbf{1}_{\bar0}$ and $W_{\bar 1}=\C[y]\mathbf{1}_{\bar1}$.
The  $\T$-module structure on $W$   denoted by    $\mathcal{N}_t(\lambda,\alpha)$   can be characterized as follows.
\begin{prop}
For $\lambda\in \C^*,\alpha\in\C,t=\pm1,f(x)\in\C[x]$ and $g(y)\in\C[y]$,
 we have the action
of   $\T$ on $W$  defined as
\begin{eqnarray}\label{4.22}
&&L_mf(x)=\lambda^m(x+m\alpha)f(x+m),
\\&&\label{4.33} L_{m}g(y)=\lambda^m\big(y+m(\alpha+\frac{1}{2})\big)g(y+m),
\\&& \label{4.44} I_rf(x)=-2t^{2r}\lambda^{r}\alpha f(x+r),
\\&& \label{4.55}  I_{r}g(y)=t^{2r}\lambda^{r}(1-2\alpha) g(y+r),
\\&& \label{4.66} G_pf(x)=t^{2p}\lambda^pf(y+p),
\\&& \label{4.77} G_pg(y)=(-t)^{2p}\lambda^p(x+2p\alpha)g(x+p),
\end{eqnarray}
where $m\in\Z,r\in\frac{1}{2}+\Z,p\in\frac{1}{2}\Z$.
 Then $W$ is an  $\T$-module under the action of \eqref{4.22}-\eqref{4.77}, which
  is a
free of rank $2$ as a module over $\mathcal{U}(\C L_0)$.
\end{prop}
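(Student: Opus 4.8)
The plan is to avoid repeating the long bracket verification of Proposition~\ref{pro2.21} by identifying $\mathcal{N}_t(\lambda,\alpha)$ with the $\T$-module $\mathcal{M}_t(\lambda,\alpha)$ already constructed there. Concretely, I would define the even linear map
\[
\Phi:\mathcal{M}_t(\lambda,\alpha)\longrightarrow W,\qquad \Phi\big(f(\partial^2)\big)=f(x),\quad \Phi\big(\partial f(\partial^2)\big)=f(y),
\]
for $f(\partial^2)\in\C[\partial^2]=\mathcal{M}_t(\lambda,\alpha)_{\bar0}$ and $\partial f(\partial^2)\in\partial\C[\partial^2]=\mathcal{M}_t(\lambda,\alpha)_{\bar1}$. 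Since each odd element of $\mathcal{M}_t(\lambda,\alpha)$ is uniquely of the form $\partial f(\partial^2)$, the map $\Phi$ restricts to linear bijections $\C[\partial^2]\to\C[x]=W_{\bar0}$ and $\partial\C[\partial^2]\to\C[y]=W_{\bar1}$, hence is a graded linear isomorphism. The goal is then to show that $\Phi$ intertwines the $\T$-action on $\mathcal{M}_t(\lambda,\alpha)$ with the prescribed formulas \eqref{4.22}--\eqref{4.77}, that is, $\Phi(Xv)=X\Phi(v)$ for every generator $X\in\{L_m,I_r,G_p\}$ and homogeneous $v$. Granting this, the prescribed action exhibits $W$ as the isomorphic image of a genuine module, so the defining super-brackets \eqref{def1.1} hold automatically and $W$ is a $\T$-module.

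For the parity-preserving generators the intertwining is a mere relabelling $\partial^2\leftrightarrow x$ on $W_{\bar0}$ and $\partial f(\partial^2)\leftrightarrow f(y)$ on $W_{\bar1}$: applying $\Phi$ to \eqref{2.22}, \eqref{2.33}, \eqref{2.44} and \eqref{2.55} reproduces \eqref{4.22}, \eqref{4.33}, \eqref{4.44} and \eqref{4.55} verbatim. The substantive checks are the two parity-changing relations for $G_p$. From \eqref{2.66} one gets $\Phi(G_pf(\partial^2))=t^{2p}\lambda^p\,\Phi(\partial f(\partial^2+p))=t^{2p}\lambda^p f(y+p)$, which equals $G_p\Phi(f(\partial^2))$ by \eqref{4.66}; and from \eqref{2.77} one gets $\Phi(G_p\partial f(\partial^2))=(-t)^{2p}\lambda^p(x+2p\alpha)f(x+p)$, which equals $G_p\Phi(\partial f(\partial^2))$ by \eqref{4.77}. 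This completes the intertwining, so $W$ is a $\T$-module.

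It remains to record the rank-$2$ freeness over $\mathcal{U}(\C L_0)$. Setting $m=0$ in \eqref{4.22} and \eqref{4.33} shows that $L_0$ acts as multiplication by $x$ on $W_{\bar0}$ and by $y$ on $W_{\bar1}$, so that $L_0^n\mathbf{1}_{\bar0}=x^n\mathbf{1}_{\bar0}$ and $L_0^n\mathbf{1}_{\bar1}=y^n\mathbf{1}_{\bar1}$. Hence $\{\mathbf{1}_{\bar0},\mathbf{1}_{\bar1}\}$ is a basis of $W$ as a free $\mathcal{U}(\C L_0)=\C[L_0]$-module, and $W$ is free of rank $2$.

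The only place demanding care is the bookkeeping for the odd generators $G_p$, where the parity-dependent signs $t^{2p}$ versus $(-t)^{2p}$ and the shift factor $(x+2p\alpha)$ must be matched correctly across the two variables $x$ and $y$; everything else is cosmetic. As an alternative (the route that mirrors Proposition~\ref{pro2.21}), one could instead verify \eqref{4.22}--\eqref{4.77} directly against \eqref{def1.1}, in which case the main obstacle is the anticommutator $G_pG_q+G_qG_p$, whose case split $p+q\in\Z$ versus $p+q\in\frac12+\Z$ must recover $2L_{p+q}$ and $(p-q)I_{p+q}$ respectively.
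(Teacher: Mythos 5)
Your argument is correct, but it is not the route the paper takes for this proposition: the paper's proof is the one-line remark that the verification is ``similar to Proposition~\ref{pro2.21},'' i.e.\ a direct re-check of all the super-brackets \eqref{def1.1} against \eqref{4.22}--\eqref{4.77}, with the identification of $W$ with $\mathcal{M}_t(\lambda,\alpha)$ deferred to the separate Lemma~\ref{lemm2.4}. What you do instead is transport the already-verified module structure of $\mathcal{M}_t(\lambda,\alpha)$ across the graded bijection $f(\partial^2)\mapsto f(x)$, $\partial f(\partial^2)\mapsto f(y)$ and observe that the transported action coincides with the prescribed formulas; your map $\Phi$ is literally the map $\Theta$ of Lemma~\ref{lemm2.4}, so your proof establishes the proposition and that lemma simultaneously. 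The intertwining checks are all correct (including the only delicate ones, \eqref{2.66}$\leftrightarrow$\eqref{4.66} and \eqref{2.77}$\leftrightarrow$\eqref{4.77}, where the parity flip and the factors $t^{2p}$ versus $(-t)^{2p}(x+2p\alpha)$ must line up), there is no circularity since Proposition~\ref{pro2.21} is proved independently, and the rank-$2$ freeness over $\C[L_0]$ follows as you say from $L_0$ acting by multiplication by $x$ on $W_{\bar0}$ and by $y$ on $W_{\bar1}$. The trade-off is minor: the paper's direct verification keeps the proposition self-contained (at the cost of repeating, or waving at, a long computation), while your transport argument is shorter and renders Lemma~\ref{lemm2.4} an immediate byproduct rather than a separate statement to prove afterwards.
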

\begin{proof}
Since the proof is similar to the Proposition \ref{pro2.21}, we omit the details.
\end{proof}

\begin{lemm}\label{lemm2.4}
 Let $\lambda\in\C^*,\alpha\in\C,t=\pm1$.
Then  $\mathcal{M}_t(\lambda,\alpha)\cong\mathcal{N}_t(\lambda,\alpha)$.
\end{lemm}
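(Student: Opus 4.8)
The plan is to write down the evident graded relabelling of the two models and verify that it intertwines every generator. Concretely, I would define the linear map
$$
\phi:\mathcal{M}_t(\lambda,\alpha)\longrightarrow\mathcal{N}_t(\lambda,\alpha),\qquad
f(\partial^2)\longmapsto f(x),\qquad
\partial f(\partial^2)\longmapsto f(y),
$$
for $f(\partial^2)\in\C[\partial^2]$, where $f(x)\in\C[x]=\mathcal{N}_t(\lambda,\alpha)_{\bar0}$ and $f(y)\in\C[y]=\mathcal{N}_t(\lambda,\alpha)_{\bar1}$. Since $\phi$ carries the monomial basis $\{(\partial^2)^n,\ \partial(\partial^2)^n\mid n\in\Z_+\}$ of $\mathcal{M}_t(\lambda,\alpha)$ bijectively onto the monomial basis $\{x^n,\ y^n\mid n\in\Z_+\}$ of $\mathcal{N}_t(\lambda,\alpha)$ and respects the $\Z_2$-grading, it is an even linear isomorphism; it therefore suffices to prove that $\phi$ is a homomorphism of $\T$-modules.

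To establish this I would check $\phi(Xv)=X\phi(v)$ for each generator $X\in\{L_m,I_r,G_p\}$ and each homogeneous $v$, reducing it to a term-by-term comparison of \eqref{2.22}--\eqref{2.77} with \eqref{4.22}--\eqref{4.77}. The point is that the two module structures carry identical scalar coefficients: the only difference is that $\mathcal{M}_t(\lambda,\alpha)$ encodes the even and odd states as $f(\partial^2)$ and $\partial f(\partial^2)$ inside a single polynomial ring, while $\mathcal{N}_t(\lambda,\alpha)$ records them separately as $f(x)$ and $f(y)$. For the grading-preserving operators $L_m$ and $I_r$ this verification is immediate, since under $\phi$ the variable $\partial^2$ is simply renamed $x$ on the even part and $y$ on the odd part.

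The only case demanding a little care is the odd operator $G_p$, which exchanges the two graded components, so one must track that the $\partial$-factor is consistently removed or reinserted by $\phi$. For the even part, \eqref{2.66} gives $G_pf(\partial^2)=t^{2p}\lambda^p\partial f(\partial^2+p)$, hence
$$
\phi\big(G_pf(\partial^2)\big)=t^{2p}\lambda^p f(y+p)=G_pf(x)=G_p\phi\big(f(\partial^2)\big)
$$
by \eqref{4.66}; for the odd part, \eqref{2.77} produces the even element $(-t)^{2p}\lambda^p(\partial^2+2p\alpha)f(\partial^2+p)$, whose image under $\phi$ is $(-t)^{2p}\lambda^p(x+2p\alpha)f(x+p)$, matching $G_pf(y)$ as computed from \eqref{4.77}. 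Once all such identities are confirmed, $\phi$ is a $\T$-module isomorphism and the lemma follows. I anticipate no genuine obstacle: the content is essentially that $\mathcal{N}_t(\lambda,\alpha)$ is a notational repackaging of $\mathcal{M}_t(\lambda,\alpha)$, and the proof is simply the unwinding of the two definitions.
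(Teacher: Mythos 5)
Your proposal is correct and coincides with the paper's own argument: the paper defines exactly the same linear map (called $\Theta$ there), sending $f(\partial^2)\mapsto f(x)$ and $\partial g(\partial^2)\mapsto g(y)$, and verifies generator by generator that it intertwines the actions \eqref{2.22}--\eqref{2.77} and \eqref{4.22}--\eqref{4.77}. Nothing further is needed.
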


\begin{proof}
 Let
\begin{eqnarray*}
\Theta:\mathcal{M}_t(\lambda,\alpha)&\longrightarrow&\mathcal{N}_t(\lambda,\alpha)
\\f(\partial^2)&\longmapsto&f(x)
\\ \partial g(\partial^2)&\longmapsto&g(y)
\end{eqnarray*}
be the linear map.
 Clearly, $\Theta$ is bijective.   Now we  are ready to  prove that it is an $\T$-module isomorphism.

For any $m\in\Z, r\in\frac{1}{2}+\Z, p\in\frac{1}{2}\Z,f(\partial^2)\in\C[\partial^2],\partial g(\partial^2)\in\partial\C[\partial^2],f(x)\in\C[x],g(y)\in\C[y]$, we have
\begin{eqnarray*}
\Theta(L_mf(\partial^2))
&=&\Theta\Big(\lambda^m(\partial^2+m\alpha)f(\partial^2+m)\Big)
\\
&=&\lambda^m(x+m\alpha)f(x+m)=L_m\Theta(f(\partial^2)),
\\ \Theta(L_m\partial g(\partial^2))
&=&\Theta\Big(\lambda^m\partial(\partial^2+m(\alpha+\frac{1}{2})) g(\partial^2+m)\Big)
\\
&=&\lambda^m(y+m(\alpha+\frac{1}{2}))g(y+m)=L_m\Theta(\partial f(\partial^2)),
\\ \Theta(I_rf(\partial^2))
&=&\Theta\Big(-2t^{2r}\lambda^{r}\alpha f(\partial^2+r)\Big)
\\
&=&-2t^{2r}\lambda^{r}\alpha f(x+r)=I_r\Theta(f(\partial^2)),
\end{eqnarray*}
\begin{eqnarray*}
\Theta(I_r\partial g(\partial^2))
&=&\Theta\Big(t^{2r}\lambda^{r}(1-2\alpha) \partial g(\partial^2+r)\Big)
\\
&=&t^{2r}\lambda^{r}(1-2\alpha) g(y+r)=I_r\Theta(\partial g(\partial^2)),
\\ \Theta(G_pf(\partial^2))
&=&\Theta\Big(t^{2p}\lambda^p f(x+p)\Big)
\\
&=&t^{2p}\lambda^p f(x+p)=G_p\Theta(f(\partial^2)),
\\ \Theta(G_p\partial g(\partial^2))
&=&\Theta\Big((-t)^{2p}\lambda^p(\partial^2+2p\alpha)g(\partial^2+p)\Big)
\\
&=&(-t)^{2p}\lambda^p(x+2p\alpha)g(x+p)=G_p\Theta(\partial g(\partial^2)).
\end{eqnarray*}
Thus,  $\Theta$   is an $\T$-module isomorphism.
\end{proof}
As a direct consequence of   Theorems \ref{th2.3}, \ref{th2ere.4} and Lemma \ref{lemm2.4},  we
get the following two theorems.
\begin{theo}\label{th346782.3}
 Assume that  $\lambda,\mu\in\C^*,\alpha,\beta\in\C,t=\pm1$ and $\pi_t=x(\mathcal{N}_t(\lambda,0)_{\bar 0})\oplus\mathcal{N}_t(\lambda,0)_{\bar 1}$.
Then
\begin{itemize}
\item[\rm(1)]   $\mathcal{N}_t(\lambda,\alpha)$ is simple if and only if  $\alpha\neq0$. In addition,  $\mathcal{N}_t(\lambda,0)$ has a unique   proper submodule $\pi_t$, and $\mathcal{N}_t(\lambda,0)/\pi_t\cong\C$.
\item[\rm(2)]
$\pi_1\cong\Pi(\mathcal{N}_{-1}(\lambda,\frac{1}{2}))$ and  $\pi_{-1}\cong\Pi(\mathcal{N}_1(\lambda,\frac{1}{2}))$  are simple $\T$-modules.
\end{itemize}
  \end{theo}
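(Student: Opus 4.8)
The plan is to obtain the entire statement by transporting the corresponding facts about $\mathcal{M}_t(\lambda,\alpha)$ recorded in Theorem \ref{th2.3} across the $\T$-module isomorphism $\Theta\colon\mathcal{M}_t(\lambda,\alpha)\to\mathcal{N}_t(\lambda,\alpha)$ supplied by Lemma \ref{lemm2.4}. Since any $\T$-module isomorphism induces a lattice isomorphism between the posets of submodules and preserves both simplicity and quotients, essentially nothing new has to be proved: it suffices to identify the image under $\Theta$ of the distinguished submodule $\Upsilon_t$ and then to invoke the functoriality of the parity-change $\Pi$.

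First I would record how $\Theta$ acts on graded pieces when $\alpha=0$: it carries the even part $\mathcal{M}_t(\lambda,0)_{\bar 0}=\C[\partial^2]$ onto $\mathcal{N}_t(\lambda,0)_{\bar 0}=\C[x]$ by $f(\partial^2)\mapsto f(x)$, and the odd part $\mathcal{M}_t(\lambda,0)_{\bar 1}=\partial\C[\partial^2]$ onto $\mathcal{N}_t(\lambda,0)_{\bar 1}=\C[y]$ by $\partial g(\partial^2)\mapsto g(y)$. For part (1) the simplicity dichotomy is then immediate, because $\mathcal{N}_t(\lambda,\alpha)\cong\mathcal{M}_t(\lambda,\alpha)$ is simple exactly when $\mathcal{M}_t(\lambda,\alpha)$ is, i.e. precisely when $\alpha\neq 0$ by Theorem \ref{th2.3}(1). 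The key computation is to check that $\Theta(\Upsilon_t)=\pi_t$: writing an even element of $\Upsilon_t$ as $\partial^2 f(\partial^2)$, regarded as the polynomial $u\,f(u)$ in $u=\partial^2$, its image is $x\,f(x)\in x\C[x]=x(\mathcal{N}_t(\lambda,0)_{\bar 0})$, while the whole odd piece $\mathcal{M}_t(\lambda,0)_{\bar 1}$ maps onto $\mathcal{N}_t(\lambda,0)_{\bar 1}$. Thus $\Theta$ identifies $\Upsilon_t=\partial^2(\mathcal{M}_t(\lambda,0)_{\bar 0})\oplus\mathcal{M}_t(\lambda,0)_{\bar 1}$ with $\pi_t=x(\mathcal{N}_t(\lambda,0)_{\bar 0})\oplus\mathcal{N}_t(\lambda,0)_{\bar 1}$. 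As $\Upsilon_t$ is the unique proper submodule of $\mathcal{M}_t(\lambda,0)$ by Theorem \ref{th2.3}(1), its image $\pi_t$ is the unique proper submodule of $\mathcal{N}_t(\lambda,0)$, and $\Theta$ descends to $\mathcal{N}_t(\lambda,0)/\pi_t\cong\mathcal{M}_t(\lambda,0)/\Upsilon_t\cong\C$.

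For part (2) I would combine the restriction $\Theta|_{\Upsilon_t}\colon\Upsilon_t\xrightarrow{\sim}\pi_t$ with Theorem \ref{th2.3}(2) and the fact that $\Pi$ takes isomorphisms to isomorphisms. Applying $\Pi$ to the isomorphisms $\mathcal{M}_{-1}(\lambda,\frac{1}{2})\cong\mathcal{N}_{-1}(\lambda,\frac{1}{2})$ and $\mathcal{M}_1(\lambda,\frac{1}{2})\cong\mathcal{N}_1(\lambda,\frac{1}{2})$ from Lemma \ref{lemm2.4} yields $\pi_1\cong\Upsilon_1\cong\Pi(\mathcal{M}_{-1}(\lambda,\frac{1}{2}))\cong\Pi(\mathcal{N}_{-1}(\lambda,\frac{1}{2}))$ and, symmetrically, $\pi_{-1}\cong\Pi(\mathcal{N}_1(\lambda,\frac{1}{2}))$; simplicity is inherited since $\Upsilon_{\pm1}$ are simple by Theorem \ref{th2.3}(2). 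The only genuinely non-formal point—and hence the main obstacle—is the bookkeeping in the identification $\Theta(\partial^2\C[\partial^2])=x\C[x]$, where one must be careful that the $\Z_2$-grading and the variable substitution interact exactly as claimed; once this is verified the whole statement follows from Theorem \ref{th2.3} and Lemma \ref{lemm2.4}.
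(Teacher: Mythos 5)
Your argument is correct and is exactly the route the paper takes: the paper states Theorem \ref{th346782.3} as a direct consequence of Theorem \ref{th2.3} and Lemma \ref{lemm2.4}, i.e.\ by transporting the structure along the isomorphism $\Theta$, and your verification that $\Theta(\Upsilon_t)=\pi_t$ (via $\partial^2 f(\partial^2)\mapsto xf(x)$ and $\partial g(\partial^2)\mapsto g(y)$) supplies the one detail the paper leaves implicit.
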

\begin{theo}\label{th9082.4}
 Let $\lambda,\mu\in\C^*,\alpha,\beta\in\C,t,t^\prime=\pm1$.
We have
\begin{itemize}
\item[\rm(i)]    $\Pi\big(\mathcal{N}_t(\lambda,\alpha)\big)\ncong\mathcal{N}_{t^\prime}(\mu^\prime,\beta^\prime)$ for any $\mu^\prime,\beta^\prime\in\C$;

\item[\rm(ii)]     $\mathcal{N}_t(\lambda,\alpha)\cong\mathcal{N}_{t^\prime}(\mu,\beta)$ if and only if $\lambda=\mu,\alpha=\beta$ and $t=t^\prime$.
\end{itemize}
  \end{theo}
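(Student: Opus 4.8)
The plan is to transport the entire classification already obtained for the rank-one modules $\mathcal{M}_t(\lambda,\alpha)$ over to the rank-two modules $\mathcal{N}_t(\lambda,\alpha)$ by means of the $\T$-module isomorphism $\mathcal{M}_t(\lambda,\alpha)\cong\mathcal{N}_t(\lambda,\alpha)$ furnished by Lemma \ref{lemm2.4}. The essential point is that this isomorphism is available for \emph{every} admissible triple $(t,\lambda,\alpha)$, so any isomorphism question about the $\mathcal{N}$'s can be rewritten verbatim as the corresponding question about the $\mathcal{M}$'s, which Theorem \ref{th2ere.4} already settles. Before doing so I would record the elementary observation that the parity-change functor $\Pi$ preserves isomorphism classes: if $\phi\colon V\to V'$ is a $\T$-module isomorphism, then the same underlying linear map is a $\T$-module isomorphism $\Pi(V)\to\Pi(V')$, since $\Pi$ merely relabels the $\Z_2$-grading and leaves the $\T$-action untouched. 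In particular $\Pi$ commutes with $\cong$.

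For part (i), I would argue by contradiction: assuming $\Pi\big(\mathcal{N}_t(\lambda,\alpha)\big)\cong\mathcal{N}_{t^\prime}(\mu^\prime,\beta^\prime)$, I apply Lemma \ref{lemm2.4} to both sides and the functoriality of $\Pi$ to the left-hand side to obtain $\Pi\big(\mathcal{M}_t(\lambda,\alpha)\big)\cong\mathcal{M}_{t^\prime}(\mu^\prime,\beta^\prime)$, which directly contradicts Theorem \ref{th2ere.4}(i). For part (ii), the ``if'' direction is immediate from composing the two instances of Lemma \ref{lemm2.4}, while the ``only if'' direction follows from the chain of equivalences $\mathcal{N}_t(\lambda,\alpha)\cong\mathcal{N}_{t^\prime}(\mu,\beta)\Longleftrightarrow\mathcal{M}_t(\lambda,\alpha)\cong\mathcal{M}_{t^\prime}(\mu,\beta)\Longleftrightarrow(\lambda=\mu,\ \alpha=\beta,\ t=t^\prime)$, where the first equivalence is Lemma \ref{lemm2.4} on each side and the second is Theorem \ref{th2ere.4}(ii).

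Since the genuine computational work has already been absorbed into the proofs of Lemma \ref{lemm2.4} and Theorem \ref{th2ere.4}, there is no real obstacle remaining; the statement is a formal consequence. The only subtlety worth spelling out is the functoriality of $\Pi$, which is what allows the parity change to be pushed through the isomorphism of Lemma \ref{lemm2.4} in part (i); once that is noted, both assertions reduce to citing the corresponding parts of Theorem \ref{th2ere.4}.
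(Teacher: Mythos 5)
Your argument is correct and coincides with the paper's own treatment: the paper likewise obtains Theorem \ref{th9082.4} as a direct consequence of Theorem \ref{th2ere.4} and Lemma \ref{lemm2.4}, transporting the classification from the $\mathcal{M}_t(\lambda,\alpha)$ to the $\mathcal{N}_t(\lambda,\alpha)$ via that isomorphism. Your explicit remark that $\Pi$ commutes with isomorphism is the only step the paper leaves implicit, and it is stated correctly.
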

\section{ Classification of free $\mathcal{U}(\mathfrak{h})$-modules of rank 1}
Assume that  $V=V_{\bar 0}\oplus V_{\bar1}$ is an $\T$-module such that it is free of rank $1$ as a $\mathcal{U}(\mathfrak{h})$-module,
where $\mathfrak{h}=\C L_0\oplus \C G_0$. It follows from  the superalgebra
structure of $\T$ in \eqref{def1.1} that  we have
$$L_0G_0=G_0L_0 \ \mathrm{and}\ G_0^2=L_0.$$
 Thus $\mathcal{U}(\mathfrak{h})=\C[L_0]\oplus G_0\C[L_0]$. Choose a homogeneous basis element $\mathbf{1}$ in $V$. Without loss of generality, up
to a parity, we may assume $\mathbf{1}\in V_{\bar 0}$  and
$$V=\mathcal{U}(\mathfrak{h})\mathbf{1}=\C[L_0]\mathbf{1}\oplus G_0\C[L_0]\mathbf{1}$$
with $V_{\bar 0}=\C[L_0]\mathbf{1}$ and $V_{\bar1}=G_0\C[L_0]\mathbf{1}$.
Then  we can  suppose that $G_0\mathbf{1}=\partial\mathbf{1},L_0\mathbf{1}=\partial^2 \mathbf{1}$.
In the following,  $V=V_{\bar 0}\oplus V_{\bar1}$ is equal to
$\C[\partial^2]\mathbf{1}\oplus \partial\C[\partial^2]\mathbf{1}$ with $V_{\bar0}=\C[\partial^2]\mathbf{1}$ and $V_{\bar1}=\partial\C[\partial^2]\mathbf{1}$.

From Theorem \ref{le2.1} (4), for
any $m\in\Z,f(\partial^2)\in \C[\partial^2]$,  there exists $\lambda\in\C^*,\alpha\in\C$ such that
\begin{eqnarray}\label{lmf31}
&&L_m(f(\partial^2))=\lambda^m(\partial^2+m\alpha)f(\partial^2+m).
\end{eqnarray}
It is clear  that $V_{\bar 0}$  can be regarded as an $\mathcal{L}$-module which is free of rank $1$ as a   $\C[L_0]$-module.
 \begin{lemm}\label{lem3.2}
For $r\in\frac{1}{2}+\Z,p\in\frac{1}{2}\Z, f(\partial^2)\in\C[\partial^2],\partial f(\partial^2)\in \partial\C[\partial^2]$, we get
\begin{itemize}
\item[\rm(1)] $I_r\partial f(\partial^2)\mathbf{1}=f(\partial^2+r)I_r\partial\mathbf{1}$;

\item[\rm(2)] $I_rf(\partial^2)\mathbf{1}=f(\partial^2+r)I_r\mathbf{1}$;

\item[\rm(3)] $G_p\partial f(\partial^2)\mathbf{1}=f(\partial^2+p)G_p\partial\mathbf{1}$;

\item[\rm(4)] $G_pf(\partial^2)\mathbf{1}=f(\partial^2+p)G_p\mathbf{1}$.
\end{itemize}
\end{lemm}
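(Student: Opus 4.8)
The plan is to reduce all four identities to a single ``shift'' rule for commuting a polynomial in $L_0$ past the operators $I_r$ and $G_p$, using that $L_0$ acts as multiplication by $\partial^2$ throughout $V$. First I would record the two commutation relations obtained from \eqref{def1.1} by setting $m=0$, namely $[L_0,I_r]=-rI_r$ and $[L_0,G_p]=-pG_p$. Since $L_0$ is even, the sign $(-1)^{|L_0|\,|\cdot|}$ in each super-bracket equals $1$, so on $V$ these become the ordinary operator identities $I_rL_0=(L_0+r)I_r$ and $G_pL_0=(L_0+p)G_p$. Crucially, because the element being commuted through is always the even element $L_0$, no parity sign from the super-bracket ever intervenes, and the odd parity of $G_p$ causes no complication.

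Next I would verify that $L_0$ acts as multiplication by $\partial^2$ on both graded pieces of $V$. On $V_{\bar 0}=\C[\partial^2]\mathbf{1}$ this is exactly \eqref{lmf31} with $m=0$. On $V_{\bar 1}=\partial\C[\partial^2]\mathbf{1}$ it follows from $L_0G_0=G_0L_0$ together with $G_0\mathbf{1}=\partial\mathbf{1}$: for $g\in\C[\partial^2]$ one computes $L_0\,\partial g(\partial^2)\mathbf{1}=L_0G_0g(L_0)\mathbf{1}=G_0L_0g(L_0)\mathbf{1}=\partial^2\,\partial g(\partial^2)\mathbf{1}$. In particular every element of $V_{\bar 0}$ is $f(L_0)\mathbf{1}=f(\partial^2)\mathbf{1}$ and every element of $V_{\bar 1}$ is $f(L_0)\partial\mathbf{1}=\partial f(\partial^2)\mathbf{1}$, which is how I will convert between the polynomial and operator viewpoints.

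Then, by a one-line induction on the degree, the two operator identities upgrade to $I_rL_0^n=(L_0+r)^nI_r$ and $G_pL_0^n=(L_0+p)^nG_p$, and hence $I_rf(L_0)=f(L_0+r)I_r$ and $G_pf(L_0)=f(L_0+p)G_p$ for every polynomial $f$. Applying the first to $\mathbf{1}$ gives $I_rf(\partial^2)\mathbf{1}=f(L_0+r)I_r\mathbf{1}$; since $I_r\mathbf{1}\in V_{\bar 0}$ and $L_0$ is multiplication by $\partial^2$ there, the operator $f(L_0+r)$ acts as multiplication by $f(\partial^2+r)$, yielding $f(\partial^2+r)I_r\mathbf{1}$, which is statement (2). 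Applying the same identity to $\partial\mathbf{1}$ and using $I_r\partial\mathbf{1}\in V_{\bar 1}$ gives (1); running the identical argument with $G_p$ and $p$ in place of $I_r$ and $r$ produces (4) and (3).

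The computation is short, so I expect the only point requiring genuine care to be the last step of each part: passing from the abstract operator equality, e.g.\ $I_rf(\partial^2)\mathbf{1}=f(L_0+r)I_r\mathbf{1}$, to the stated polynomial form $f(\partial^2+r)I_r\mathbf{1}$. This is legitimate precisely because $I_r\mathbf{1}$, $I_r\partial\mathbf{1}$, $G_p\mathbf{1}$ and $G_p\partial\mathbf{1}$ are homogeneous elements of $V$ on which $L_0$ acts as multiplication by $\partial^2$, so that $f(L_0+r)$ and $f(L_0+p)$ are simply multiplication by $f(\partial^2+r)$ and $f(\partial^2+p)$ respectively. Everything else is the routine degree-one-to-degree-$n$ induction licensed by the two basic brackets.
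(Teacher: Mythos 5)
Your proposal is correct and follows essentially the same route as the paper: the paper likewise reads off $I_rL_0=(L_0+r)I_r$ (and its $G_p$ analogue) from \eqref{def1.1}, upgrades it recursively to $I_rL_0^n=(L_0+r)^nI_r$, and extends linearly to polynomials, applying the result to $\mathbf{1}$ and $\partial\mathbf{1}$. Your extra care in checking that $L_0$ acts as multiplication by $\partial^2$ on $V_{\bar 1}$ (via $L_0G_0=G_0L_0$) and that no parity signs intervene is a welcome tightening of details the paper leaves implicit, but it is not a different argument.
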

\begin{proof}  (1)  It is easy to get that
$I_rL_0\partial\mathbf{1}=(L_0+r)I_r\partial\mathbf{1}$ by  the
relations of $\T$ in \eqref{def1.1}.
Recursively, we conclude  that
$I_rL_0^n\partial\mathbf{1}=(L_0+r)^nI_r\partial\mathbf{1}$ for $n\in\Z_+$.
Hence,
$I_r\partial f(\partial^2)\mathbf{1}=f(\partial^2+r)I_r\partial\mathbf{1}$ for $r\in\frac{1}{2}+\Z$.

Similarly, we obtain (2), (3) and (4).
\end{proof}

 \begin{lemm}\label{le3.33}
For $r\in\frac{1}{2}+\Z,p\in\frac{1}{2}\Z,\lambda\in\C^*,\alpha\in\C$, one of the following two cases will take place.
\begin{itemize}
\item[\rm(a)] $G_p\mathbf{1}=\lambda^p\partial\mathbf{1},  G_r\partial\mathbf{1}=-\lambda^{r}(\partial^2+2r\alpha)\mathbf{1},  I_r\mathbf{1}=-2\lambda^{r}\alpha\mathbf{1}$;

\item[\rm(b)] $G_p\mathbf{1}=(-1)^{2p}\lambda^p\partial\mathbf{1},  G_r\partial\mathbf{1}=\lambda^{r}(\partial^2+2r\alpha)\mathbf{1},  I_r\mathbf{1}=2\lambda^{r}\alpha\mathbf{1}$.
\end{itemize}
\end{lemm}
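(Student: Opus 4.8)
The plan is to read off the unknown actions step by step, using the established even action \eqref{lmf31}, the defining brackets \eqref{def1.1}, and the shift rules of Lemma \ref{lem3.2}. Since $G_p$ is odd and $I_r$ is even, parity forces $G_p\mathbf1\in V_{\bar1}$, $G_r\partial\mathbf1\in V_{\bar0}$ and $I_r\mathbf1\in V_{\bar0}$, so I may write $G_p\mathbf1=g_p(\partial^2)\partial\mathbf1$, $G_r\partial\mathbf1=h_r(\partial^2)\mathbf1$ and $I_r\mathbf1=i_r(\partial^2)\mathbf1$ for polynomials $g_p,h_r,i_r\in\C[\partial^2]$; by Lemma \ref{lem3.2} it suffices to pin down these three families, since the action on a general element is then recovered by the shifts. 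Throughout I also use $G_0\mathbf1=\partial\mathbf1$ and $G_0\partial\mathbf1=G_0^2\mathbf1=L_0\mathbf1=\partial^2\mathbf1$.

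First I would settle the integer indices, and with them the Witt action on $V_{\bar1}$. Applying the relations $[G_0,G_m]=2L_m$ and $[G_m,G_m]=2L_{2m}$ (here $(-1)^{2m}=1$) to $\mathbf1$ and reducing by Lemma \ref{lem3.2} gives the closed pair $g_m(\partial^2)\partial^2+h_m(\partial^2)=2\lambda^m(\partial^2+m\alpha)$ and $g_m(\partial^2+m)h_m(\partial^2)=\lambda^{2m}(\partial^2+2m\alpha)$, where $h_m$ abbreviates $G_m\partial\mathbf1$. Eliminating $h_m$ yields one functional equation for $g_m$; comparing degrees forces $g_m$ to be a constant $c$, and the equation then collapses to $(c-\lambda^m)^2=0$, so $g_m=\lambda^m$. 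Feeding this back through $[L_m,G_0]=\tfrac m2 G_m$ shows $L_m\partial h(\partial^2)\mathbf1=\lambda^m(\partial^2+m(\alpha+\tfrac12))\partial h(\partial^2+m)\mathbf1$, i.e. $V_{\bar1}\cong\Omega(\lambda,\alpha+\tfrac12)$ as a $\mathcal V$-module.

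With the full $L$-action in hand I would propagate to half-integer indices. Using $[L_m,G_p]=(\tfrac m2-p)G_{m+p}$ together with the now-known action on both $V_{\bar0}$ and $V_{\bar1}$, the covariance identity forces each $g_p$ to be constant (a leading-coefficient comparison across different $m$ rules out positive degree) and gives $g_{m+r}=\lambda^m g_r$; hence $g_r=s\lambda^r$ for all $r\in\frac12+\Z$, where $s:=\lambda^{-1/2}g_{1/2}$ is a single scalar still to be determined. The relation $[G_r,G_r]=2G_r^2=-2L_{2r}$ (now $(-1)^{2r}=-1$) then gives $h_r(\partial^2)=-s^{-1}\lambda^r(\partial^2+2r\alpha)$, and $[G_0,G_r]=rI_r$ gives $r\,i_r(\partial^2)=g_r(\partial^2)\partial^2+h_r(\partial^2)=\lambda^r\big((s-s^{-1})\partial^2-2r\alpha s^{-1}\big)$.

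The main obstacle is the final sign: at this stage every bracket tested holds for an arbitrary nonzero $s$ (in particular the $G$--$G$ relations are identically consistent, the factor $s$ cancelling), so the two cases cannot be separated from the $L$--$G$ and $G$--$G$ relations alone. The relation that collapses the freedom is $[I_r,G_p]=G_{r+p}$: applied to $\mathbf1$ with $p$ ranging over $\Z$, and writing $I_r\partial\mathbf1=j_r(\partial^2)\partial\mathbf1$, it reads $g_p\big(j_r(\partial^2)-i_r(\partial^2+p)\big)=g_{r+p}$, which can hold for all integers $p$ only if $i_r$ is constant. Comparing with $r\,i_r=\lambda^r\big((s-s^{-1})\partial^2-2r\alpha s^{-1}\big)$ forces $s-s^{-1}=0$, i.e. $s^2=1$. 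Setting $t=s$ with $s=\pm1$ and substituting back produces exactly the two alternatives: $s=1$ gives $g_p=\lambda^p$, $h_r=-\lambda^r(\partial^2+2r\alpha)$, $i_r=-2\lambda^r\alpha$, which is (a), while $s=-1$ gives $g_p=(-1)^{2p}\lambda^p$, $h_r=\lambda^r(\partial^2+2r\alpha)$, $i_r=2\lambda^r\alpha$, which is (b). I expect isolating this one decisive relation, together with the degree arguments showing the coefficient polynomials are as low-degree as claimed, to be the only non-mechanical parts of the argument.
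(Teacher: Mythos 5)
Your proposal is correct, and it reaches (a) and (b) by a genuinely different selection of bracket relations than the paper, even though the overall architecture (polynomial ansatz for $G_p\mathbf{1}$, $G_r\partial\mathbf{1}$, $I_r\mathbf{1}$, reduction via Lemma \ref{lem3.2}, degree counting) is the same. On the integer indices you and the paper coincide: $[G_0,G_m]=2L_m$ and $G_m^2=L_{2m}$ give $(c-\lambda^m)^2=0$. The divergence is at the two half-integer junctures. For the constancy of $G_r\mathbf{1}$ the paper factors the degree-one identity $f_r(\partial^2+r)h_r(\partial^2)=-\lambda^{2r}(\partial^2+2r\alpha)$ coming from $[G_r,G_r]=-2L_{2r}$ and then excludes the branch ``$h_r$ constant'' via $[L_m,G_r]\partial\mathbf{1}$; you instead derive the $\mathcal{V}$-action on $V_{\bar1}$ first and get constancy plus the recursion $g_{m+r}=\lambda^m g_r$ directly from $[L_m,G_r]\mathbf{1}$. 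For the final $\pm$ dichotomy the paper computes $I_r\mathbf{1}$ from $[G_0,G_r]$ and imposes $[I_r,I_s]\mathbf{1}=0$ to force $c_r=\pm\lambda^r$, whereas you impose $[I_r,G_p]\mathbf{1}=G_{r+p}\mathbf{1}$ for $p\in\Z$ to force $i_r$ constant and hence $s^2=1$. Your route buys something real: by producing a single scalar $s$ with $g_r=s\lambda^r$ for \emph{all} $r\in\frac12+\Z$ before the dichotomy, the uniformity of the sign across all half-integers --- which the lemma's statement requires and which the paper passes over silently when it jumps from ``$c_r=\pm\lambda^r$'' to the two global alternatives --- is automatic. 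Two small points of care: your claim that $g_p\bigl(j_r(\partial^2)-i_r(\partial^2+p)\bigr)=g_{r+p}$ forces $i_r$ constant is right precisely because the ratio $g_{r+p}/g_p=s\lambda^r$ is independent of $p$, so $i_r(x+p)$ is the same polynomial for every integer $p$; and the ``leading-coefficient comparison'' showing $\deg g_r=0$ is slightly more than one comparison --- one must play the relation for $m$ against the one for $-m$ (or several $m$) to land on $d(d+2)m^2=0$ --- but it does close.
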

\begin{proof} To prove this, we    suppose
\begin{eqnarray*}
G_p\mathbf{1} = \partial f_p(\partial^2)\mathbf{1}\in \partial\C[\partial^2]\mathbf{1},  I_r\mathbf{1} = g_r(\partial^2)\mathbf{1}\in \C[\partial^2]\mathbf{1},
G_p\partial\mathbf{1} = h_p(\partial^2)\mathbf{1}\in \C[\partial^2]\mathbf{1},
\end{eqnarray*} where $p\in\frac{1}{2}\Z,r\in\frac{1}{2}+\Z$.
For any $m\in\Z$, by $[G_0,G_m]\mathbf{1}=2L_m\mathbf{1}$, we get
\begin{eqnarray*}
G_m\partial\mathbf{1}=\big(2\lambda^m(\partial^2+m\alpha)-\partial^2f_m(\partial^2)\big)\mathbf{1}.
\end{eqnarray*}
Using $G_m^2\mathbf{1}=L_{2m}\mathbf{1}$, one can check that
\begin{eqnarray}\label{12345.222}
f_m(\partial^2+m)(2\lambda^m(\partial^2+m\alpha)-\partial^2f_m(\partial^2))
=\lambda^{2m}(\partial^2+2m\alpha).
\end{eqnarray}
Comparing the degree of  $\partial^2$   in \eqref{12345.222}, one has
\begin{eqnarray}\label{3.222}
f_m(\partial^2)=a_m\in\C
\end{eqnarray}for $m\in\Z$.
Taking \eqref{3.222} into \eqref{12345.222}, it is easy to get
\begin{eqnarray}\label{am3.222}
a_m=\lambda^m.
\end{eqnarray}

For any   $r\in \frac{1}{2}+\Z$, from  $[G_{r},G_{r}]\mathbf{1}=-2L_{2r}\mathbf{1}$,
we   have
\begin{eqnarray}\label{fr35}
f_{r}(\partial^2+r)h_{r}(\partial^2)
=-\lambda^{2r}(\partial^2+2r\alpha).
\end{eqnarray}
This implies that either $f_{r}(\partial^2+r)$ or $h_{r}(\partial^2)$ is a nonzero constant.
Suppose  $h_{r}(\partial^2)=d_r\in\C^*$. For $m\in\Z,r\in\Z+\frac{1}{2}$, then it follows from  $[L_{m},G_{r}]\partial\mathbf{1}=(\frac{m}{2}-r)G_{m+r}\partial\mathbf{1}$
that we get
\begin{eqnarray}\label{334423}
(r+\frac{m}{2})\lambda^md_r=(r-\frac{m}{2})d_{m+r}.
\end{eqnarray}
Taking $m=2r$ in \eqref{334423}, one has $d_r=0$, which  leads to conflict.
Then considering  \eqref{fr35} again,  we conclude
\begin{eqnarray}\label{fr334423}
f_{r}(\partial^2+r)=c_r\in\C^*, h_{r}(\partial^2)=-\frac{1}{c_r}\lambda^{2r}(\partial^2+2r\alpha).
\end{eqnarray}
For $r\in\Z+\frac{1}{2}$,   by  $[G_{0},G_{r}]\mathbf{1}=rI_{r}\mathbf{1}$,
 we have
 \begin{eqnarray}\label{35532}
 g_r(\partial^2)=\frac{1}{rc_r}((c_r^2-\lambda^{2r})\partial^2-2r\alpha\lambda^{2r}).
 \end{eqnarray}
For $r,s\in\Z+\frac{1}{2}$,  it follows from \eqref{35532}
that $$I_{r}I_{s}\mathbf{1}=\frac{1}{rsc_rc_s}((c_s^2-\lambda^{2s})(\partial^2+r)-2s\alpha\lambda^{2s})((c_r^2-\lambda^{2r})\partial^2-2r\alpha\lambda^{2r}).$$
Based on $[I_{r},I_{s}]\mathbf{1}=0$,
 we deduce that
 $$r(c_s^2-\lambda^{2s})\big((c_r^2-\lambda^{2r})\partial^2-2\lambda^{2r}r\alpha\big)
 =s(c_r^2-\lambda^{2r})\big((c_s^2-\lambda^{2s})\partial^2-2\lambda^{2s}s\alpha\big).$$
 This gives $c_r=\pm\lambda^r$  for $r\in\Z+\frac{1}{2}$.
Combining with \eqref{am3.222}, we have
 $$f_{p}(\partial^2)=\lambda^p\ \mathrm{or}\ f_{p}(\partial^2)=(-1)^{2p}\lambda^p$$ for $p\in\frac{1}{2}\Z$.
We separately put   them
into  \eqref{fr334423} and
\eqref{35532},  one can obtain
  (a) and (b).
\end{proof}

Now we present the main result of this section, which gives a
complete classification of free $\mathcal{U}(\mathfrak{h})$-modules of rank 1 over $\T$.
\begin{theo}\label{th3.5}
 Assume that $V$ is an   $\T$-module such that the restriction of $V$ as a $\mathcal{U}(\mathfrak{h})$-module is free of rank $1$.
 Then up to a
parity, $V\cong\M_1(\lambda,\alpha)$, or $V\cong\M_{-1}(\lambda,\alpha)$ for $\lambda\in\C^*,\alpha\in\C$.
\end{theo}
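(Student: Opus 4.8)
The plan is to show that the data already pinned down by Lemmas \ref{lem3.2} and \ref{le3.33} determine the \emph{entire} $\T$-action on $V$, and that the two alternatives (a) and (b) of Lemma \ref{le3.33} reproduce exactly the defining formulas \eqref{2.22}--\eqref{2.77} of $\mathcal{M}_1(\lambda,\alpha)$ and $\mathcal{M}_{-1}(\lambda,\alpha)$ respectively. Since we normalized $\mathbf{1}\in V_{\bar 0}$, the phrase ``up to a parity'' simply accounts for the alternative normalization $\mathbf{1}\in V_{\bar 1}$, which replaces the conclusion by $\Pi(\mathcal{M}_{\pm 1}(\lambda,\alpha))$. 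By \eqref{lmf31} (equivalently Theorem \ref{le2.1}(4)) the action of $L_m$ on $V_{\bar 0}=\C[\partial^2]\mathbf{1}$ is governed by some $\lambda\in\C^*,\alpha\in\C$, which is \eqref{2.22}; Lemma \ref{le3.33} then leaves only cases (a) and (b). I would treat (a) in detail and claim (b) is verbatim the same with the recorded sign changes. In case (a) the values $G_p\mathbf{1}=\lambda^p\partial\mathbf{1}$ and $I_r\mathbf{1}=-2\lambda^r\alpha\mathbf{1}$ together with Lemma \ref{lem3.2} immediately give \eqref{2.66} and \eqref{2.44} on all of $V_{\bar 0}$.

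Next I would recover the three actions on $V_{\bar 1}=\partial\C[\partial^2]\mathbf{1}=G_0\C[\partial^2]\mathbf{1}$ that are not listed in Lemma \ref{le3.33}, by commuting the relevant generator past $G_0$. Writing any odd element as $G_0f(\partial^2)\mathbf{1}$ and using the brackets $[L_m,G_0]=\frac{m}{2}G_m$ and $[I_r,G_0]=G_r$ from \eqref{def1.1}, one gets
\begin{align*}
L_m\,\partial f(\partial^2)\mathbf{1}&=G_0\,L_mf(\partial^2)\mathbf{1}+\tfrac{m}{2}\,G_mf(\partial^2)\mathbf{1},\\
I_r\,\partial f(\partial^2)\mathbf{1}&=G_0\,I_rf(\partial^2)\mathbf{1}+G_rf(\partial^2)\mathbf{1}.
\end{align*}
Evaluating the right-hand sides with \eqref{2.22}, \eqref{2.44}, the values $G_m\mathbf{1}=\lambda^m\partial\mathbf{1}$ and $G_r\mathbf{1}=\lambda^r\partial\mathbf{1}$, and Lemma \ref{lem3.2}, reproduces exactly \eqref{2.33} and \eqref{2.55} with $t=1$. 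For the last action $G_p\partial\mathbf{1}$ I would assemble two inputs: the half-integer values $G_r\partial\mathbf{1}=-\lambda^r(\partial^2+2r\alpha)\mathbf{1}$ from Lemma \ref{le3.33}(a), and the integer values $G_m\partial\mathbf{1}=\lambda^m(\partial^2+2m\alpha)\mathbf{1}$ already produced inside the proof of Lemma \ref{le3.33} (from $[G_0,G_m]\mathbf{1}=2L_m\mathbf{1}$ with $f_m=\lambda^m$). The two signs coalesce into the single factor $(-1)^{2p}$, which is $(-t)^{2p}$ at $t=1$, so with Lemma \ref{lem3.2} this is precisely \eqref{2.77}.

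Having matched every structure constant of $V$ with those of $\mathcal{M}_1(\lambda,\alpha)$, the identity map on $\C[\partial^2]\oplus\partial\C[\partial^2]$ sending $\mathbf{1}$ to the canonical generator is a $\T$-module isomorphism $V\cong\mathcal{M}_1(\lambda,\alpha)$. Running the identical computation with the signs of Lemma \ref{le3.33}(b), which force $G_p\mathbf{1}=(-1)^{2p}\lambda^p\partial\mathbf{1}$ and hence the factor $t=-1$, yields $V\cong\mathcal{M}_{-1}(\lambda,\alpha)$. The only genuine work — and the place where a slip is easiest — is the bookkeeping of the parity-dependent signs $t^{2p}$ and $(-t)^{2p}$ together with the integer-versus-half-integer split for $G_p\partial\mathbf{1}$; once these are tracked, the matching is mechanical. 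I therefore do not anticipate a conceptual obstacle, only the careful verification that cases (a) and (b) land on $t=1$ and $t=-1$ consistently across all of \eqref{2.22}--\eqref{2.77}. \QED
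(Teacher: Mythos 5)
Your proposal is correct and follows essentially the same route as the paper: fix $\lambda,\alpha$ from the even part via Theorem \ref{le2.1}(4), reduce to the two alternatives of Lemma \ref{le3.33}, propagate to all of $V$ with Lemma \ref{lem3.2}, and recover the odd-part actions by commuting $L_m$ and $I_r$ past $G_0$, with the integer and half-integer values of $G_p\partial\mathbf{1}$ merging into the single sign $(-t)^{2p}$. The only cosmetic difference is that you import $G_m\partial\mathbf{1}=\lambda^m(\partial^2+2m\alpha)\mathbf{1}$ from inside the proof of Lemma \ref{le3.33}, whereas the paper rederives it from $\frac{2}{m}[L_m,G_0]$; both are the same computation.
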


\begin{proof}
 We first consider  Lemma \ref{le3.33} (a).    For $r\in\Z+\frac{1}{2},p\in\frac{1}{2}\Z$, it follows from Lemma  \ref{lem3.2} that we obtain
\begin{eqnarray}
&&G_pf(\partial^2)\mathbf{1}=f(\partial^2+p)G_p\mathbf{1}=\lambda^p\partial f(\partial^2+p)\mathbf{1},\label{gpf38}
\\&&I_rf(\partial^2)\mathbf{1}=f(\partial^2+r)I_r\mathbf{1}=-2\lambda^{r}\alpha f(\partial^2+r)\mathbf{1},\label{irf39}
\\&&G_r\partial f(\partial^2)\mathbf{1}= f(\partial^2+r)G_r\partial\mathbf{1}=-\lambda^r(\partial^2+2r\alpha)f(\partial^2+r)\mathbf{1}.\label{iasrf39}
\end{eqnarray}
For $r\in\Z+\frac{1}{2},$ we can check  that
  \begin{eqnarray}\label{ir310}
  &&I_r\partial f(\partial^2)\mathbf{1}=G_0I_rf(\partial^2)\mathbf{1}+G_rf(\partial^2)\mathbf{1}=\lambda^{r}\partial(1-2\alpha) f(\partial^2+r)\mathbf{1}.
 \end{eqnarray}

Besides, for $0\neq m\in\Z$, we confirm that
\begin{eqnarray}\nonumber
L_{m}\partial f(\partial^2)\mathbf{1}
&=&L_{m}G_0f(\partial^2)\mathbf{1}\nonumber
\\&=&G_0L_{m}f(\partial^2)\mathbf{1}+\frac{m}{2}G_{m}f(\partial^2)\mathbf{1}\nonumber
\\&=&\lambda^m\partial(\partial^2+m(\alpha+\frac{1}{2}))f(\partial^2+m)\mathbf{1} \label{lmp311}
\end{eqnarray}
and
\begin{eqnarray}
G_{m}\partial f(\partial^2)\mathbf{1}\nonumber
&=&\frac{2}{m}\big(L_{m}G_0\partial f(\partial^2)\mathbf{1}-G_0L_m\partial f(\partial^2)\mathbf{1}\big)
\\&=&\frac{2}{m}\lambda^m(\frac{m}{2}\partial^2+m^2\alpha)f(\partial^2+m)\nonumber
\\&=&\lambda^m(\partial^2+2m\alpha)f(\partial^2+m)\mathbf{1}.\label{gm311}
\end{eqnarray}
Then \eqref{iasrf39} and \eqref{gm311} give
\begin{eqnarray}\label{gp09}
G_{p}\partial f(\partial^2)\mathbf{1}=(-1)^{2p}\lambda^p(\partial^2+2p\alpha)f(\partial^2+m)\mathbf{1}
\end{eqnarray} for $p\in\frac{1}{2}\Z$.
Thus, \eqref{lmf31}, \eqref{gpf38}, \eqref{irf39}, \eqref{ir310}, \eqref{lmp311} and \eqref{gp09} show that $V\cong\M_1(\lambda,\alpha)$ as $\T$-modules.

Now consider Lemma \ref{le3.33} (b). From Lemma  \ref{lem3.2}  and by the similar discussions,  we deduce $V\cong\M_{-1}(\lambda,\alpha)$.
 The theorem is proved.
\end{proof}

\section{Classification of free $\mathcal{U}(\mathfrak{t})$-modules of rank $2$}
Let  $W=W_{\bar 0}\oplus W_{\bar1}$ be an $\T$-module
 such that it is free of rank $2$ as a $\mathcal{U}(\mathfrak{t})$-module
with two homogeneous basis elements $v$ and $w$. If the parities of $v$ and $w$ are the same, for
$r\in\frac{1}{2}+\Z$ then
 $G_{\pm r}v=G_{\pm r}w=0$. Thus,
$$L_{0}v=-\frac{1}{2}[G_{r},G_{-r}]v=0,\   L_{0}w=-\frac{1}{2}[G_{r},G_{-r}]w=0,$$
which  is  incompatible with $\mathcal{U}(\C L_0)$-free.
We know that $v$ and $w$ are different parities. Set $v=\mathbf{1}_{\bar 0}\in W_{\bar 0}$ and
$w=\mathbf{1}_{\bar 1}\in W_{\bar 1}$. As a vector space, we have $W_{\bar 0}= \C[x]\mathbf{1}_{\bar 0}$ and $W_{\bar 1}= \C[y]\mathbf{1}_{\bar 1}$.

Clearly,
  $W_{\bar 0}$ and $W_{\bar 1}$  are both can be viewed     as $\mathcal{V}$-modules.
  According to Theorem \ref{le2.1} (1),
there exist
 $\lambda,\mu\in \C^*,\alpha,\beta\in\C,f(x)\in\C[x]$ and $g(y)\in\C[y]$
such that
\begin{eqnarray}\label{eqqqw4.1}
&&L_mf(x)=\lambda^m(x+m\alpha)f(x+m),
\\&& L_mg(y)=\mu^m(y+m\beta)g(y+m).\label{eqqew4.2}
\end{eqnarray}

For later use,   two preliminary lemmas are presented as follows.
 \begin{lemm}\label{lem4.21452}
 Let $\lambda,\mu\in\C^*,\alpha,\beta\in\C,f(x)\in\C[x],g(y)\in\C[y]$. We obtain $\lambda=\mu$ and  one
of the following two cases occurs.
\begin{itemize}
\item[\rm(i)]  $G_{0}\mathbf{1}_{\bar0}=\mathbf{1}_{\bar1},  G_{0}\mathbf{1}_{\bar1}
    =x\mathbf{1}_{\bar0}, G_{\frac{1}{2}}\mathbf{1}_{\bar0}=\lambda^{\frac{1}{2}}\mathbf{1}_{\bar1},  G_{\frac{1}{2}}\mathbf{1}_{\bar1}
    =-\lambda^{\frac{1}{2}}(x+\alpha)\mathbf{1}_{\bar0},  \beta=\alpha+\frac{1}{2}$,
    \\ $\mathrm{or}$\  $G_{0}\mathbf{1}_{\bar0}=\mathbf{1}_{\bar1},  G_{0}\mathbf{1}_{\bar1}
    =x\mathbf{1}_{\bar0}, G_{\frac{1}{2}}\mathbf{1}_{\bar0}=-\lambda^{\frac{1}{2}}\mathbf{1}_{\bar1},  G_{\frac{1}{2}}\mathbf{1}_{\bar1}
    =\lambda^{\frac{1}{2}}(x+\alpha)\mathbf{1}_{\bar0},  \beta=\alpha+\frac{1}{2}$;
\item[\rm(ii)] $G_{0}\mathbf{1}_{\bar0}
    =y\mathbf{1}_{\bar1},  G_{0}\mathbf{1}_{\bar1}
    =\mathbf{1}_{\bar0}, G_{\frac{1}{2}}\mathbf{1}_{\bar0}
    =-\lambda^{\frac{1}{2}}(y+\alpha-\frac{1}{2})\mathbf{1}_{\bar1},  G_{\frac{1}{2}}\mathbf{1}_{\bar1}
    =\lambda^{\frac{1}{2}}\mathbf{1}_{\bar0},  \beta=\alpha-\frac{1}{2}$,
   \\ $\mathrm{or}$\  $G_{0}\mathbf{1}_{\bar0}
    =y\mathbf{1}_{\bar1},  G_{0}\mathbf{1}_{\bar1}
    =\mathbf{1}_{\bar0}, G_{\frac{1}{2}}\mathbf{1}_{\bar0}
    =\lambda^{\frac{1}{2}}(y+\alpha-\frac{1}{2})\mathbf{1}_{\bar1},  G_{\frac{1}{2}}\mathbf{1}_{\bar1}
    =-\lambda^{\frac{1}{2}}\mathbf{1}_{\bar0},  \beta=\alpha-\frac{1}{2}$.
\end{itemize}
\end{lemm}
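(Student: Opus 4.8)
The plan is to pin down the action of the two odd generators $G_0$ and $G_{1/2}$ on the free generators $\mathbf{1}_{\bar 0},\mathbf{1}_{\bar 1}$, since everything the lemma asserts is carried by these two operators. Because $G_0,G_{1/2}$ are odd they interchange $W_{\bar 0}=\C[x]\mathbf{1}_{\bar 0}$ and $W_{\bar 1}=\C[y]\mathbf{1}_{\bar 1}$, so I would write $G_0\mathbf{1}_{\bar 0}=a(y)\mathbf{1}_{\bar 1}$, $G_0\mathbf{1}_{\bar 1}=b(x)\mathbf{1}_{\bar 0}$, $G_{1/2}\mathbf{1}_{\bar 0}=c(y)\mathbf{1}_{\bar 1}$, $G_{1/2}\mathbf{1}_{\bar 1}=d(x)\mathbf{1}_{\bar 0}$ for polynomials $a,b,c,d$. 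Using $[L_0,G_0]=0$ and $[L_0,G_{1/2}]=-\frac{1}{2}G_{1/2}$, together with the fact that $L_0$ acts as multiplication by $x$ on $W_{\bar 0}$ and by $y$ on $W_{\bar 1}$ (from \eqref{eqqqw4.1} and \eqref{eqqew4.2} with $m=0$), an easy induction on degree gives the propagation formulas $G_0(f(x)\mathbf{1}_{\bar 0})=f(y)a(y)\mathbf{1}_{\bar 1}$ and $G_{1/2}(f(x)\mathbf{1}_{\bar 0})=f(y+\frac{1}{2})c(y)\mathbf{1}_{\bar 1}$, and the two analogous identities on $W_{\bar 1}$. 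This reduces every subsequent check to a polynomial identity in $a,b,c,d$.

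Next I would read off the coarse shape from the quadratic relations. From $[G_0,G_0]=2L_0$ we have $G_0^2=L_0$, and applying it to $\mathbf{1}_{\bar 0}$ gives $a(x)b(x)=x$; hence $a,b$ are nonzero with $\{\deg a,\deg b\}=\{0,1\}$, and after rescaling the free generator $\mathbf{1}_{\bar 1}$ (determined only up to a scalar) we are left with exactly the two normal forms $a=1,\,b=x$ and $a=y,\,b=1$, which will become cases (i) and (ii). Likewise $[G_{1/2},G_{1/2}]=-2L_1$ gives $G_{1/2}^2=-L_1$, and evaluating on $\mathbf{1}_{\bar 0}$ and on $\mathbf{1}_{\bar 1}$ yields $c(x+\frac{1}{2})d(x)=-\lambda(x+\alpha)$ and $d(y+\frac{1}{2})c(y)=-\mu(y+\beta)$. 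Comparing leading coefficients forces $\lambda=\mu$, shows $\{\deg c,\deg d\}=\{0,1\}$, and, once $\deg c$ is known, pins $\beta=\alpha+\frac{1}{2}$ when $\deg c=0$ and $\beta=\alpha-\frac{1}{2}$ when $\deg c=1$, with $d$ then determined by $c$.

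The crux, and the step I expect to be the main obstacle, is to prove that $\deg c$ must equal $\deg a$ (so case (i) forces $\deg c=0$ and case (ii) forces $\deg c=1$) and that the surviving constant is exactly $\pm\lambda^{1/2}$. The difficulty is that every relation built only from $L_m,G_0,G_{1/2}$ and their brackets — including $G_0^2=L_0$, $G_{1/2}^2=-L_1$, $G_{-1/2}^2=-L_{-1}$, $\{G_{1/2},G_{-1/2}\}=-2L_0$, and $[I_{1/2},G_0]=G_{1/2}$ — is also satisfied by the wrong degree assignment and by an arbitrary nonzero constant, so none of them is decisive. The deciding constraint is the commutativity of the odd-weight currents $[I_{1/2},I_{-1/2}]=0$, exactly as in the rank-one Lemma \ref{le3.33}. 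Concretely I would compute $I_{1/2}=2[G_0,G_{1/2}]$ and, via $G_{-1/2}=-[L_{-1},G_{1/2}]$, also $I_{-1/2}=-2[G_0,G_{-1/2}]$, obtaining $I_{\pm1/2}\mathbf{1}_{\bar 0}=\phi_{\pm1/2}(x)\mathbf{1}_{\bar 0}$ with $\phi_{\pm1/2}$ explicit polynomials of degree at most two in the data $c,d,\lambda,\alpha$. Propagating as before gives $I_{\pm1/2}(f(x)\mathbf{1}_{\bar 0})=f(x\pm\frac{1}{2})\phi_{\pm1/2}(x)\mathbf{1}_{\bar 0}$, so $[I_{1/2},I_{-1/2}]\mathbf{1}_{\bar 0}=0$ becomes the polynomial identity $\phi_{-1/2}(x+\frac{1}{2})\phi_{1/2}(x)=\phi_{1/2}(x-\frac{1}{2})\phi_{-1/2}(x)$. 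A direct expansion shows the difference of the two sides is a nonzero polynomial unless $\phi_{1/2}$ is a nonzero constant; this single condition simultaneously eliminates the mismatched degree assignment and forces the constant term of $c$ to satisfy $c^2=\lambda$, i.e. to equal $\pm\lambda^{1/2}$. Feeding $c=\pm\lambda^{1/2}$ back through the identities of the previous paragraph then produces precisely the two sign-variants in (i) (with $\beta=\alpha+\frac{1}{2}$) and the symmetric computation produces those in (ii) (with $\beta=\alpha-\frac{1}{2}$), completing the proof.
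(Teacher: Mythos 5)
Your proposal is correct and follows the same skeleton as the paper's proof: write the $G_0$- and $G_{\frac{1}{2}}$-actions on the two generators as polynomials, use $G_0^2=L_0$ and $G_{\frac{1}{2}}^2=-L_1$ on both $\mathbf{1}_{\bar0}$ and $\mathbf{1}_{\bar1}$ to obtain $\lambda=\mu$, the four degree patterns and $\beta=\alpha\pm\frac{1}{2}$, then eliminate the two mismatched patterns and pin the constant to $\pm\lambda^{\frac{1}{2}}$. The difference lies in which relations you make decisive. The paper rules out the mismatched cases with a purely Virasoro--fermion relation: it computes $G_m\mathbf{1}_{\bar0}=\frac{2}{m}[L_m,G_0]\mathbf{1}_{\bar0}=-\lambda^m\mathbf{1}_{\bar1}$ and substitutes into $[L_m,G_{-m}]\mathbf{1}_{\bar0}=\frac{3m}{2}G_0\mathbf{1}_{\bar0}$, forcing $\mathbf{1}_{\bar1}=0$; and it fixes the constant from $[I_{\frac{1}{2}},I_{\frac{3}{2}}]\mathbf{1}_{\bar0}=0$, with $I_{\frac{3}{2}}$ obtained via $[L_1,I_{\frac{1}{2}}]$. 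You instead let the single relation $[I_{\frac{1}{2}},I_{-\frac{1}{2}}]=0$ do both jobs, and this does work: in the matched case the commutator acts on $\mathbf{1}_{\bar0}$ by a nonzero multiple of $(\gamma_{\frac{1}{2}}^2-\lambda)^2x$, while in the mismatched case $\phi_{\pm\frac{1}{2}}$ are genuinely quadratic and the commutator is a nonzero cubic, so the identity simultaneously excludes the bad degree pattern and forces $\gamma_{\frac{1}{2}}^2=\lambda$. This is slightly more economical and mirrors the rank-one Lemma \ref{le3.33} more closely. One aside in your write-up is false, though it does not affect your argument: you assert that no relation built only from the $L_m$'s and the $G_p$'s can detect the mismatched cases, but $[L_m,G_{-m}]=\frac{3m}{2}G_0$ is exactly such a relation and is what the paper uses. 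Also, ``unless $\phi_{\frac{1}{2}}$ is a nonzero constant'' should read ``unless $\phi_{\frac{1}{2}}$ is constant,'' since it vanishes identically when $\alpha=0$ and $\gamma_{\frac{1}{2}}^2=\lambda$.
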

\begin{proof} Suppose  $G_{p}\mathbf{1}_{\bar0}=f_p(y)\mathbf{1}_{\bar1}$ and $G_{p}\mathbf{1}_{\bar1}=g_p(x)\mathbf{1}_{\bar0}$.
For $p\in\{0,\frac{1}{2}\}$, using
$$G_{p}^2\mathbf{1}_{\bar0}=f_p(x+p)g_p(x)\mathbf{1}_{\bar0}\ \mathrm{and} \ L_{2p}\mathbf{1}_{\bar0}= \lambda^{2p}(x+2p\alpha)$$ in $[G_{p},G_{p}]\mathbf{1}_{\bar0}=(-1)^{2p}2L_{2p}\mathbf{1}_{\bar0}$,   we check
\begin{eqnarray}\label{gft4.1}
f_p(x+p)g_p(x)=(-1)^{2p}\lambda^{2p}(x+2p\alpha).
\end{eqnarray}
Choosing $p=0$ in \eqref{gft4.1}, we immediately obtain
\begin{eqnarray}\label{eq4.1}
&&f_{0}(x)=\gamma_{0},g_{0}(x) =\frac{1}{\gamma_{0}}x\   \mathrm{or} \ f_{0}(x)=\frac{1}{\gamma_{0}}x,g_{0}(x) =\gamma_{0}.
\end{eqnarray}
Up to a parity,   we can assume  $\gamma_0=1$ without loss of generality.
Then \eqref{eq4.1} can be rewritten as
\begin{eqnarray*}
&&f_{0}(x)=1,g_{0}(x) =x\  \mathrm{or} \  f_{0}(x)=x,g_{0}(x) =1.
\end{eqnarray*}
Choosing $p=\frac{1}{2}$ in \eqref{gft4.1}, we have
\begin{eqnarray}\label{of44}
 f_{\frac{1}{2}}(x+\frac{1}{2})=\gamma_{\frac{1}{2}},g_{\frac{1}{2}}(x) =-\frac{1}{\gamma_{\frac{1}{2}}}\lambda(x+\alpha)
  \  \mathrm{or} \    f_{\frac{1}{2}}(x+\frac{1}{2})=-\frac{1}{\gamma_{\frac{1}{2}}}\lambda(x+\alpha),g_{\frac{1}{2}}(x) =\gamma_{\frac{1}{2}}.
\end{eqnarray}
Similarly, by $G_{{\frac{1}{2}}}^2\mathbf{1}_{\bar1}=-L_{1}\mathbf{1}_{\bar1}$, we can conclude that  $g_{\frac{1}{2}}(y+{\frac{1}{2}})f_{\frac{1}{2}}(y)=-\mu  (y+\beta)$.
According to  \eqref{of44},
it is easy to  check that     $\lambda=\mu,\beta=\alpha+\frac{1}{2}$ or $\lambda=\mu,\beta=\alpha-\frac{1}{2}$.
Then we have the following four cases.
\begin{itemize}
\item[\rm(1)]  $f_{0}(x)=1,g_{0}(x) =x, f_{\frac{1}{2}}(x)=\gamma_{\frac{1}{2}},g_{\frac{1}{2}}(x) =-\frac{\lambda}{\gamma_{\frac{1}{2}}}(x+\alpha),  \beta=\alpha+\frac{1}{2}$;
 \item[\rm(2)]  $f_{0}(x)=1,g_{0}(x) =x, f_{\frac{1}{2}}(x)=-\frac{\lambda}{\gamma_{\frac{1}{2}}}(x+\alpha-\frac{1}{2}),g_{\frac{1}{2}}(x) =\gamma_{\frac{1}{2}},  \beta=\alpha-\frac{1}{2}$;
\item[\rm(3)] $f_{0}(x)=x,g_{0}(x) =1, f_{\frac{1}{2}}(x)=-\frac{\lambda}{\gamma_{\frac{1}{2}}}(x+\alpha-\frac{1}{2}),g_{\frac{1}{2}}(x) =\gamma_{\frac{1}{2}},  \beta=\alpha-\frac{1}{2}$;

\item[\rm(4)] $f_{0}(x)=x,g_{0}(x) =1,  f_{\frac{1}{2}}(x)=\gamma_{\frac{1}{2}},g_{\frac{1}{2}}(x) =-\frac{\lambda}{\gamma_{\frac{1}{2}}}(x+\alpha),  \beta=\alpha+\frac{1}{2}$.
\end{itemize}
\begin{clai}
 $(2)$ and $(4)$ do not occur.
\end{clai}
First  consider case $(2)$.
For $0\neq m\in\Z$,
we get
\begin{eqnarray}\label{23waq}
G_m\mathbf{1}_{\bar 0}&=&\frac{2}{m}(L_mG_0-G_0L_m)\mathbf{1}_{\bar 0}\nonumber
\\&=&\frac{2}{m}(L_m\mathbf{1}_{\bar 1}-G_0\lambda^m(x+m\alpha)\mathbf{1}_{\bar 0})\nonumber
\\&=&-\lambda^m\mathbf{1}_{\bar 1}.
\end{eqnarray}
Taking \eqref{23waq} into
 $([L_m,G_{-m}])\mathbf{1}_{\bar 0}=\frac{3m}{2}G_0\mathbf{1}_{\bar 0}$,
which  yields  a contradiction by $\mathbf{1}_{\bar 1}=0$.
From the similar arguments, we obtain that $(4)$ does not occur, too.
The claim holds.

Now we consider  (1).  For $r\in\frac{1}{2}+\Z$, write $I_{r}\mathbf{1}_{\bar 0}=h_{r}(x)\mathbf{1}_{\bar 0}$. By  $[G_{0},G_{\frac{1}{2}}]\mathbf{1}_{\bar 0}=\frac{1}{2}I_{\frac{1}{2}}\mathbf{1}_{\bar 0}$,
 we have
 \begin{eqnarray}\label{h1235532}
 h_{\frac{1}{2}}(x)=\frac{2}{\gamma_{\frac{1}{2}}}((\gamma_{\frac{1}{2}}^2-\lambda)x-\lambda\alpha).
 \end{eqnarray}
Then from  $[L_{1},I_{\frac{1}{2}}]\mathbf{1}_{\bar 0}=-\frac{1}{2}I_{\frac{3}{2}}\mathbf{1}_{\bar 0}$,
 we observe
 \begin{eqnarray}\label{qw35532}
 h_{\frac{3}{2}}(x)=-\frac{4\lambda}{\gamma_{\frac{1}{2}}}
 \big((\gamma_{\frac{1}{2}}^2-\lambda)(\frac{1}{2}x+\alpha)+\frac{1}{2}\lambda\alpha\big).
 \end{eqnarray}
Applying   \eqref{h1235532} and \eqref{qw35532}  to $[I_{\frac{1}{2}},I_{\frac{3}{2}}]\mathbf{1}_{\bar0}=0$,
 we see that
 $$(\gamma_{\frac{1}{2}}^2-\lambda)((\gamma_{\frac{1}{2}}^2-\lambda)x-\lambda\alpha)
 =3(\gamma_{\frac{1}{2}}^2-\lambda)((\gamma_{\frac{1}{2}}^2-\lambda)(x+2\alpha)+\lambda\alpha),$$
which  implies  $\gamma_{\frac{1}{2}}=\pm\lambda^\frac{1}{2}$. Plugging this into (1), we have (i).

By an identical process  in (3),  we know that  the results of (ii). This completes the proof.
\end{proof}

Up to a parity, we only study     (i)   of Lemma  \ref{lem4.21452}.
\begin{lemm}\label{lem4.33}
For any $m\in\Z,r \in \frac{1}{2}+\Z,p\in\frac{1}{2} \Z,f(x)\in\C[x], g(y)\in\C[y],$ we have
\begin{eqnarray*}
&&G_pf(x)\mathbf{1}_{\bar 0}=\lambda^{p}f(y+p)\mathbf{1}_{\bar 1},
\
G_pg(y)\mathbf{1}_{\bar 1}=(-1)^{2p}\lambda^{p}(x+2p\alpha)g(x+p)\mathbf{1}_{\bar 0},
\\&&
I_rf(x)\mathbf{1}_{\bar 0}=-2\alpha\lambda^{r}f(x+r)\mathbf{1}_{\bar 1},
\
I_rg(y)\mathbf{1}_{\bar 1}=\lambda^{r}(1-2\alpha)g(y+r)\mathbf{1}_{\bar 0};
\\&
\mathrm{or}&
  G_pf(x)\mathbf{1}_{\bar 0}=(-1)^{2p}\lambda^{p}f(y+p)\mathbf{1}_{\bar 1},
\
G_pg(y)\mathbf{1}_{\bar 1}=\lambda^{p}(x+2p\alpha)g(x+p)\mathbf{1}_{\bar 0},
\\&&
I_rf(x)\mathbf{1}_{\bar 0}=2\alpha\lambda^{r}f(x+r)\mathbf{1}_{\bar 1},
\
I_rg(y)\mathbf{1}_{\bar 1}=-\lambda^{r}(1-2\alpha)g(y+r)\mathbf{1}_{\bar 0}.
\end{eqnarray*}
\end{lemm}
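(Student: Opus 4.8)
The plan is to follow the two-step pattern of Lemma~\ref{lem3.2} and the proof of Theorem~\ref{th3.5}: first a ``pull-through'' reduction that transfers every operator past an arbitrary polynomial, and then a bootstrap that determines the action of each $G_p$ and $I_r$ on the two generators $\mathbf{1}_{\bar 0},\mathbf{1}_{\bar 1}$. By \eqref{eqqqw4.1}--\eqref{eqqew4.2} the element $L_0$ acts as multiplication by $x$ on $W_{\bar 0}=\C[x]\mathbf{1}_{\bar 0}$ and by $y$ on $W_{\bar 1}=\C[y]\mathbf{1}_{\bar 1}$, so $f(x)\mathbf{1}_{\bar 0}=f(L_0)\mathbf{1}_{\bar 0}$ and $g(y)\mathbf{1}_{\bar 1}=g(L_0)\mathbf{1}_{\bar 1}$. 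From the relations $[L_0,G_p]=-pG_p$ and $[L_0,I_r]=-rI_r$ in \eqref{def1.1} I read off the operator identities $G_pL_0=(L_0+p)G_p$ and $I_rL_0=(L_0+r)I_r$, whence induction on the degree yields
\[
G_p\,f(L_0)\mathbf{1}_{\bar\varepsilon}=f(L_0+p)\,G_p\mathbf{1}_{\bar\varepsilon},\qquad
I_r\,f(L_0)\mathbf{1}_{\bar\varepsilon}=f(L_0+r)\,I_r\mathbf{1}_{\bar\varepsilon}
\]
for $\bar\varepsilon\in\{\bar 0,\bar 1\}$. Because $G_p$ is odd it carries $\mathbf{1}_{\bar 0}$ into $W_{\bar 1}$ (where $L_0=y$) and $\mathbf{1}_{\bar 1}$ into $W_{\bar 0}$ (where $L_0=x$), while $I_r$ preserves parity; evaluating $L_0$ in the correct variable after the shift reduces the whole lemma to computing $G_p\mathbf{1}_{\bar 0}$, $G_p\mathbf{1}_{\bar 1}$, $I_r\mathbf{1}_{\bar 0}$ and $I_r\mathbf{1}_{\bar 1}$ for all indices.

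Next I would propagate the base data of Lemma~\ref{lem4.21452}(i), which fixes $G_0$ and $G_{1/2}$ on both generators, to every $p\in\frac{1}{2}\Z$ and $r\in\frac{1}{2}+\Z$. For integer indices the relation $[L_m,G_0]=\frac{m}{2}G_m$ gives $G_m\mathbf{1}_{\bar 0}=\frac{2}{m}[L_m,G_0]\mathbf{1}_{\bar 0}=\lambda^m\mathbf{1}_{\bar 1}$ for $m\neq 0$, exactly as in \eqref{23waq}, and similarly for $G_m\mathbf{1}_{\bar 1}$; for half-integer indices I apply $[L_m,G_{1/2}]=(\frac{m}{2}-\frac{1}{2})G_{m+1/2}$ to the known $G_{1/2}$. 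The $I_r$-action is obtained by iterating $[L_m,I_{1/2}]=-\frac{1}{2}I_{m+1/2}$, starting from the value of $I_{1/2}$ already extracted as $h_{1/2}$ in the proof of Lemma~\ref{lem4.21452} (it becomes the scalar $-2\lambda^{1/2}\alpha$ once $\gamma_{1/2}=\lambda^{1/2}$ is imposed). This produces the generator values $G_p\mathbf{1}_{\bar 0}=\lambda^p\mathbf{1}_{\bar 1}$, $G_p\mathbf{1}_{\bar 1}=(-1)^{2p}\lambda^p(x+2p\alpha)\mathbf{1}_{\bar 0}$, $I_r\mathbf{1}_{\bar 0}=-2\alpha\lambda^r\mathbf{1}_{\bar 0}$ and $I_r\mathbf{1}_{\bar 1}=\lambda^r(1-2\alpha)\mathbf{1}_{\bar 1}$, which one checks against the brackets $[I_r,G_p]=G_{r+p}$ and $[G_p,G_q]$ for consistency.

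Substituting these values into the two pull-through identities gives the displayed formulas. The two alternatives in the statement correspond precisely to the two admissible signs $\gamma_{1/2}=\pm\lambda^{1/2}$ left open in Lemma~\ref{lem4.21452}(i): the sign of $G_{1/2}$ propagates coherently through the recursions above, flipping the factor $(-1)^{2p}$ and the sign of the $I_r$-eigenvalues to produce the second family. I expect the main obstacle to be purely organizational, namely keeping the parity-dependent signs $(-1)^{2p}$ consistent as one passes between integer and half-integer indices, handling the degenerate step $G_{3/2}$ where the coefficient $\frac{m}{2}-\frac{1}{2}$ vanishes at $m=1$ (reached instead through a two-step route with a nonzero structure constant), and verifying that the value assigned to each $G_p$ and $I_r$ is independent of the commutator route used to reach it, so that the action is well defined. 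Granting this bookkeeping, the computations are routine consequences of \eqref{def1.1}.
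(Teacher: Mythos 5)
Your proposal is correct and follows essentially the same strategy as the paper: reduce the action on arbitrary polynomials to the action on the generators $\mathbf{1}_{\bar 0},\mathbf{1}_{\bar 1}$ via the shift identities coming from $[L_0,G_p]=-pG_p$ and $[L_0,I_r]=-rI_r$, then propagate the base data of Lemma \ref{lem4.21452}(i) to all indices with $[L_m,G_0]$ and $[L_m,G_{\frac{1}{2}}]$, treating the degenerate index $G_{\frac{3}{2}}$ separately (the paper likewise records $G_{\frac{3}{2}}\mathbf{1}_{\bar 1}$ by a separate route before unifying the formula). You actually make explicit a pull-through step that the paper only uses implicitly when it passes from $G_m\mathbf{1}_{\bar 1}$ to $G_mf(y)\mathbf{1}_{\bar 1}$. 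The one place where your route genuinely diverges is the $I_r$-action: the paper computes $I_rf(x)\mathbf{1}_{\bar 0}$ and $I_rg(y)\mathbf{1}_{\bar 1}$ directly as $\frac{1}{r}[G_0,G_r]$ applied to the vectors, using the already-established $G_p$ formulas, which handles both parities at once; you instead iterate $[L_m,I_{\frac{1}{2}}]=-\frac{1}{2}I_{m+\frac{1}{2}}$ starting from $h_{\frac{1}{2}}$. That works, but note that $h_{\frac{1}{2}}$ from the proof of Lemma \ref{lem4.21452} only records $I_{\frac{1}{2}}\mathbf{1}_{\bar 0}$; to get $I_r g(y)\mathbf{1}_{\bar 1}$ you still need $I_{\frac{1}{2}}\mathbf{1}_{\bar 1}$, which you must extract from $[I_{\frac{1}{2}},G_0]=G_{\frac{1}{2}}$ (or, as the paper does, from $[G_0,G_{\frac{1}{2}}]=\frac{1}{2}I_{\frac{1}{2}}$) --- your closing remark about checking against $[I_r,G_p]=G_{r+p}$ covers this, but it is an input, not merely a consistency check.
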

\begin{proof}
Let us first consider the first case of   Lemma   \ref{lem4.21452} (i) as  $$G_{0}\mathbf{1}_{\bar0}=\mathbf{1}_{\bar1},  G_{0}\mathbf{1}_{\bar1}
    =x\mathbf{1}_{\bar0}, G_{\frac{1}{2}}\mathbf{1}_{\bar0}=\lambda^{\frac{1}{2}}\mathbf{1}_{\bar1},  G_{\frac{1}{2}}\mathbf{1}_{\bar1}
    =-\lambda^{\frac{1}{2}}(x+\alpha)\mathbf{1}_{\bar0},  \beta=\alpha+\frac{1}{2}.$$
    For $1\neq m\in\Z$, we obtain
\begin{eqnarray*}
G_{m+\frac{1}{2}}\mathbf{1}_{\bar1}
&=&\frac{2}{m-1}\big(L_mG_\frac{1}{2}-G_\frac{1}{2}L_m\big)\mathbf{1}_{\bar1}
\\&=&(-1)^{2m+1}\lambda^{m+\frac{1}{2}}(x+(2m+1)\alpha)\mathbf{1}_{\bar0}.
\end{eqnarray*}
This combine with
$
G_{\frac{3}{2}}\mathbf{1}_{\bar1}
=-\lambda^\frac{3}{2}(x+3\alpha)\mathbf{1}_{\bar0},
$
one has
\begin{eqnarray}\label{gm320}
G_{m+\frac{1}{2}}\mathbf{1}_{\bar1}=(-1)^{2m+1}\lambda^{m+\frac{1}{2}}(x+(2m+1)\alpha)\mathbf{1}_{\bar0},
\end{eqnarray} where $m\in\Z.$
For $0\neq m\in\Z$, it is clear that
 \begin{eqnarray}\label{gm321}
G_{m}\mathbf{1}_{\bar1}
=\frac{2}{m}\big(L_mG_0-G_0L_m)\mathbf{1}_{\bar1}
=\lambda^m(x+2m\alpha)\mathbf{1}_{\bar0}.
\end{eqnarray}
From \eqref{gm320} and \eqref{gm321}, we   conclude
\begin{eqnarray}\label{gp98}
G_{p}f(y)\mathbf{1}_{\bar1}=
(-1)^{2p}\lambda^{p}(x+2p\alpha)f(x+p)\mathbf{1}_{\bar0}
\end{eqnarray} for
$p\in\frac{1}{2}\Z.$

Similarly,   we deduce that
  \begin{eqnarray}\label{gp99}
  G_{p}f(x)\mathbf{1}_{\bar0}
=\lambda^{p}f(y+p)\mathbf{1}_{\bar1}
\end{eqnarray}
for $p\in\frac{1}{2}\Z$.
For any $r\in\frac{1}{2}+\Z$, according to \eqref{gp98} and \eqref{gp99}, we compute that
\begin{eqnarray}\label{ir99}
&&I_{r}f(x)\mathbf{1}_{\bar0}\nonumber
=\frac{1}{r}[G_0,G_r]f(x)\mathbf{1}_{\bar0}
\\&=&\frac{1}{r}\big(\lambda^{r}G_0f(y+r)\mathbf{1}_{\bar1}+G_{r}f(y)\mathbf{1}_{\bar1}\big)\nonumber
\\&=&-2\lambda^{r}\alpha f(x+r)\mathbf{1}_{\bar1},
\\&&I_{r}f(y)\mathbf{1}_{\bar1}=\frac{1}{r}[G_0,G_r]f(y)\mathbf{1}_{\bar1}\nonumber
\\&=&\frac{1}{r}\big(\lambda^{r}G_0(-1)^{2r}(x+2r\alpha)f(x+r)\mathbf{1}_{\bar0}+G_{r}xf(x)\mathbf{1}_{\bar0}\big)\nonumber
\\&=&\lambda^{r}(1-2\alpha)f(y+r)\mathbf{1}_{\bar1}.\label{ir22}
\end{eqnarray}
Then \eqref{gp98}, \eqref{gp99}, \eqref{ir99} and \eqref{ir22} show the first case of the lemma.
From the rest of   Lemma   \ref{lem4.21452} (i) and by  the similar analysis methods, the other case can be obtained.  The lemma holds.
\end{proof}
According to \eqref{eqqqw4.1}, \eqref{eqqew4.2} and Lemma \ref{lem4.33}, a
complete  classification of free $\mathcal{U}(\mathfrak{t})$-modules of rank $2$ over $\T$
 are given.
\begin{theo}\label{th9.5}
Let $W$ be an   $\T$-module such that the restriction of $W$ as a $\mathcal{U}(\mathfrak{t})$-module is free of rank $2$.
 Then up to a
parity, $W\cong\mathcal{N}_1(\lambda,\alpha)$, or  $W\cong\mathcal{N}_{-1}(\lambda,\alpha)$ for $\lambda\in\C^*,\alpha\in\C$.
\end{theo}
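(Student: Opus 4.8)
The plan is to treat this theorem as the final assembly of the two preparatory lemmas, since all of the substantive structural analysis has already been carried out. By the reduction preceding Lemma \ref{lem4.21452}, I may assume $W_{\bar 0}=\C[x]\mathbf{1}_{\bar 0}$ and $W_{\bar 1}=\C[y]\mathbf{1}_{\bar 1}$, with the even- and odd-part Witt actions given by \eqref{eqqqw4.1} and \eqref{eqqew4.2}. Lemma \ref{lem4.21452} then forces $\lambda=\mu$ and, up to applying the parity functor $\Pi$ (which interchanges case (i) and case (ii) of that lemma), reduces us to case (i), where in addition $\beta=\alpha+\frac{1}{2}$. Finally Lemma \ref{lem4.33} records the complete action of every $G_p$ ($p\in\frac{1}{2}\Z$) and every $I_r$ ($r\in\frac{1}{2}+\Z$) on $W$, split into two alternatives. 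Thus nothing further about the representation needs to be derived; the theorem is a matching statement.

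The key step is to exhibit the isomorphism explicitly. I would define the parity-preserving linear bijection
\begin{eqnarray*}
\Xi:W&\longrightarrow&\mathcal{N}_t(\lambda,\alpha)
\\ f(x)\mathbf{1}_{\bar 0}&\longmapsto&f(x)1_{\bar 0}
\\ g(y)\mathbf{1}_{\bar 1}&\longmapsto&g(y)1_{\bar 1}
\end{eqnarray*}
and then verify that it intertwines the $\T$-actions by comparing the formulas of Lemma \ref{lem4.33} against the defining relations \eqref{4.44}--\eqref{4.77} of $\mathcal{N}_t(\lambda,\alpha)$. Concretely, the first alternative in Lemma \ref{lem4.33} should correspond to the choice $t=1$, yielding $W\cong\mathcal{N}_1(\lambda,\alpha)$, while the second alternative corresponds to $t=-1$, yielding $W\cong\mathcal{N}_{-1}(\lambda,\alpha)$. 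The $L_m$-relations \eqref{4.22} and \eqref{4.33} are already matched by \eqref{eqqqw4.1}, \eqref{eqqew4.2} together with the identity $\beta=\alpha+\frac{1}{2}$ supplied by Lemma \ref{lem4.21452}.

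The one point that needs genuine care---rather than being a real obstacle---is tracking the parity exponents, since this is essentially the only place where the two values of $t$ are distinguished. For $t=1$ one has $t^{2p}=1$ and $(-t)^{2p}=(-1)^{2p}$, whereas for $t=-1$ one has $t^{2p}=(-1)^{2p}$ and $(-t)^{2p}=1$; moreover, since $2r$ is odd for $r\in\frac{1}{2}+\Z$, one has $t^{2r}=-1$ when $t=-1$. These identities are exactly what convert the $\lambda^{p}$ versus $(-1)^{2p}\lambda^{p}$ coefficients of $G_p$, and the $-2\alpha\lambda^{r}$ versus $+2\alpha\lambda^{r}$ coefficients of $I_r$, appearing in the two alternatives of Lemma \ref{lem4.33}, into the single uniform expressions $t^{2p}\lambda^{p}$, $(-t)^{2p}\lambda^{p}$, $-2t^{2r}\lambda^{r}\alpha$ and $t^{2r}\lambda^{r}(1-2\alpha)$ of \eqref{4.44}--\eqref{4.77}. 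Once these sign matches are confirmed term by term, $\Xi$ is a $\T$-module isomorphism, and restoring the parity normalization made at the outset completes the classification.
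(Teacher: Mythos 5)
Your proposal is correct and follows exactly the route the paper takes: the paper states Theorem \ref{th9.5} as an immediate consequence of \eqref{eqqqw4.1}, \eqref{eqqew4.2} and Lemmas \ref{lem4.21452}--\ref{lem4.33}, with no further argument. Your explicit map $\Xi$ and the term-by-term sign bookkeeping ($t^{2p}$, $(-t)^{2p}$, $t^{2r}=-1$ for $t=-1$) just make visible the matching the paper leaves implicit, and it checks out.
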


\section{Twisted modules}
The following results of the automorphism group of the twisted $N=2$ superconformal algebra appeared in \cite{FHY}.
\begin{prop}
Let $m\in\Z,r\in\frac{1}{2}+\Z,p\in\frac{1}{2}\Z$. Then $\mathrm{Aut}(\T)=\{\omega_b\mid b\in\C^*\}$, where
$$\omega_b(L_m)=-b^{2m}L_{-m},\ \omega_b(I_r)=b^{2r}I_{-r},\ \omega_b(G_p)=b^{2p}\sqrt{-1}G_{-p}.$$
\end{prop}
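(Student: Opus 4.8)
The plan is to establish the two inclusions $\{\omega_b\mid b\in\C^*\}\subseteq\mathrm{Aut}(\T)$ and $\mathrm{Aut}(\T)\subseteq\{\omega_b\mid b\in\C^*\}$ separately. For the first I would check that each $\omega_b$ is invertible (on the basis it is the reflection $d\mapsto -d$ of the degree together with a nonzero rescaling, hence bijective) and that it preserves every nontrivial bracket in \eqref{def1.1}. The verification splits by bracket type: $[L_m,L_n]$, $[L_m,I_r]$, $[L_m,G_p]$, the vanishing $[I_r,I_s]$, $[I_r,G_p]$, and the two branches of $[G_p,G_q]$. Since $\omega_b$ multiplies a generator of degree $d$ by $b^{2d}$ with $d\in\frac{1}{2}\Z$, the powers of $b$ balance automatically in each identity, so the only genuine issue is the bookkeeping of signs. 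The factor $\sqrt{-1}$ attached to $G_p$ produces $(\sqrt{-1})^2=-1$ in every $G$-$G$ bracket, and this is precisely cancelled by the sign $-1$ placed in front of $L_{-m}$, so that both $[G_p,G_q]=(-1)^{2p}2L_{p+q}$ and $[G_p,G_q]=(-1)^{2p+1}(p-q)I_{p+q}$ go over to themselves. I expect this half to be entirely routine once the sign cancellations are displayed.

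For the reverse inclusion, let $\sigma\in\mathrm{Aut}(\T)$. As the bracket is parity-preserving, $\sigma$ respects the $\Z_2$-grading, so $\sigma(\T_{\bar 0})=\T_{\bar 0}$ and $\sigma(\T_{\bar 1})=\T_{\bar 1}$. The decisive step is to determine $\sigma(L_0)$. Here I would use that $\mathrm{ad}\,L_0$ is diagonalizable and acts on the degree-$d$ homogeneous component by the scalar $-d$, with $d$ ranging over $\frac{1}{2}\Z$. Since $\mathrm{ad}\,\sigma(L_0)=\sigma\circ(\mathrm{ad}\,L_0)\circ\sigma^{-1}$ is again diagonalizable with the identical spectrum and multiplicities, decomposing $\sigma(L_0)$ into its graded parts and observing that a component of nonzero degree would prevent $\mathrm{ad}\,\sigma(L_0)$ from being semisimple forces $\sigma(L_0)$ to be homogeneous of degree $0$. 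Because the degree-$0$ part of $\T_{\bar 0}$ is $\C L_0$, this gives $\sigma(L_0)=\mu L_0$ with $\mu\in\C^*$, and matching the spectra of $\mathrm{ad}(\mu L_0)$ and $\mathrm{ad}\,L_0$ yields $\mu=\pm1$; I then take the value $\mu=-1$ shared by all the $\omega_b$.

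Knowing $\sigma(L_0)$, the identity $[\sigma(L_0),\sigma(X)]=\sigma([L_0,X])$ shows that $\sigma$ carries a homogeneous element of degree $d$ to one of degree $-d$, so there are scalars with $\sigma(L_m)=a_mL_{-m}$, $\sigma(I_r)=c_rI_{-r}$, $\sigma(G_p)=e_pG_{-p}$ and $a_0=-1$. I would then feed these into the defining brackets. The relation $[L_m,L_n]=(m-n)L_{m+n}$ yields $a_ma_n=-a_{m+n}$ for $m\neq n$, whose solution compatible with $a_0=-1$ is $a_m=-b^{2m}$ for a single parameter $b\in\C^*$. The relation $[L_m,G_p]=(\frac{m}{2}-p)G_{m+p}$ propagates this exponential behaviour to the odd generators, while $[G_0,G_0]=2L_0$ forces $e_0^2=-1$, i.e. $e_0=\sqrt{-1}$, and $[G_{\frac{1}{2}},G_{\frac{1}{2}}]=-2L_1$ fixes the half-integer values, giving $e_p=b^{2p}\sqrt{-1}$; finally $[I_r,G_p]=G_{r+p}$ gives $c_r=e_{r+p}e_p^{-1}=b^{2r}$. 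Substituting back reproduces exactly the formulas defining $\omega_b$, and the proof is complete. The main obstacle is the middle paragraph, namely controlling $\sigma(L_0)$ through the semisimplicity of its adjoint action; once that is settled the last paragraph is a bootstrapped computation with no conceptual difficulty.
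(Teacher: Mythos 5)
First, note that the paper contains no proof of this proposition: it is imported wholesale from \cite{FHY}, so there is nothing internal to compare your argument against and I can only judge it on its own terms. Your first inclusion is correct and routine, and your overall strategy for the converse (force $\sigma(L_0)$ to be homogeneous of degree $0$ via the semisimplicity of $\mathrm{ad}\,\sigma(L_0)$, then bootstrap through the brackets) is the standard and appropriate one. One small remark on the first half: in the branch $[G_p,G_q]=(-1)^{2p+1}(p-q)I_{p+q}$ the factor $(\sqrt{-1})^2=-1$ is cancelled by $(-p)-(-q)=-(p-q)$, not by a sign in front of $I_{-r}$ (there is none); the identity still checks out.

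The genuine gap is the sentence ``I then take the value $\mu=-1$.'' The case $\mu=+1$ cannot be discarded: it is realized by the maps $\theta_b$ with $\theta_b(L_m)=b^{2m}L_m$, $\theta_b(I_r)=b^{2r}I_r$, $\theta_b(G_p)=b^{2p}G_p$, which preserve every bracket of \eqref{def1.1} (the verification is the same power-of-$b$ bookkeeping as in your first half) and include the identity at $b=1$. None of these is an $\omega_c$, since $\omega_c(L_0)=-L_0$. In fact the equality $\mathrm{Aut}(\T)=\{\omega_b\mid b\in\C^*\}$ cannot be literally true for structural reasons: the right-hand side contains no identity element and is not closed under composition ($\omega_b\circ\omega_c$ sends $L_m$ to $(cb^{-1})^{2m}L_m$), whereas the left-hand side is a group. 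A second instance of the same oversight occurs at ``$e_0^2=-1$, i.e.\ $e_0=\sqrt{-1}$'': the choice $e_0=-\sqrt{-1}$ is equally consistent and yields $\omega_b$ composed with the involution fixing $\T_{\bar 0}$ and sending $G_p\mapsto -G_p$, again outside $\{\omega_c\}$. What your method actually establishes, once these branches are retained, is that $\mathrm{Aut}(\T)$ is generated by the rescalings $\theta_b$, this involution, and a single flip $\omega_1$, with $\{\omega_b\}$ forming one non-identity coset; only that coset is needed for the twisted modules of Section 5, so the paper's downstream use is unaffected, but your proof of the proposition as stated cannot be completed because the statement itself is too strong.
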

We have the  statement as follows.
\begin{prop}
Let $m\in\Z,r\in\frac{1}{2}+\Z,p\in\frac{1}{2}\Z,b,\tau\in\C^*,\omega_b\in\mathrm{Aut}(\T),t=\pm1$.
 Assume that    $\mathcal{M}_t(\lambda,\alpha)_b$ is an $\T$-module $\mathcal{M}_t(\lambda,\alpha)$ after a twist by $\omega_b$.
  Then ${{\mathcal{M}_t}}(\lambda,\alpha)\cong\mathcal{M}_t(\tau,\alpha)_b$, where $\tau^p=b^{2p}\lambda^{-p}$.
\end{prop}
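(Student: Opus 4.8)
The plan is to exhibit an explicit isomorphism $\phi\colon\mathcal{M}_t(\lambda,\alpha)\to\mathcal{M}_t(\tau,\alpha)_b$ of $\T$-modules. The first step is to record the twisted action completely. By definition of the twist, $x\cdot_b v=\omega_b(x)v$, where the right-hand side is the original action on $\mathcal{M}_t(\tau,\alpha)$ given by \eqref{2.22}--\eqref{2.77} with $\lambda$ replaced by $\tau$. Feeding in $\omega_b(L_m)=-b^{2m}L_{-m}$, $\omega_b(I_r)=b^{2r}I_{-r}$ and $\omega_b(G_p)=b^{2p}\sqrt{-1}\,G_{-p}$ and evaluating on $f(\partial^2)$ and $\partial f(\partial^2)$ yields six formulas; for instance
\[
L_m\cdot_b f(\partial^2)=-b^{2m}\tau^{-m}(\partial^2-m\alpha)f(\partial^2-m),\qquad
G_p\cdot_b f(\partial^2)=\sqrt{-1}\,b^{2p}\tau^{-p}t^{2p}\,\partial f(\partial^2-p).
\]
The two facts that make these usable are $b^{2p}\tau^{-p}=\lambda^p$ --- which is precisely the hypothesis $\tau^p=b^{2p}\lambda^{-p}$ --- together with $t^{-2p}=t^{2p}$ and $(-t)^{-2p}=(-t)^{2p}$, valid because $t=\pm1$. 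After these substitutions every twisted formula is free of $b$ and $\tau$, depends on $\lambda$ alone, and is visibly the $\partial^2\mapsto-\partial^2$ reflection of the corresponding $\mathcal{M}_t(\lambda,\alpha)$-formula, up to overall signs and, for the two $G_p$-formulas, a factor $\sqrt{-1}$.

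Prompted by this reflection, the second step is to define $\phi(f(\partial^2))=f(-\partial^2)$ on $V_{\bar0}$ and $\phi(\partial f(\partial^2))=\sqrt{-1}\,\partial f(-\partial^2)$ on $V_{\bar1}$; this is manifestly a parity-preserving linear bijection. It then remains to check $\phi(xv)=x\cdot_b\phi(v)$ for each generator $x\in\{L_m,I_r,G_p\}$ applied to each of $f(\partial^2)$ and $\partial f(\partial^2)$. The $L_m$ and $I_r$ checks are routine: applying $\phi$ sends $(\partial^2+m\alpha)f(\partial^2+m)$ to $(-\partial^2+m\alpha)f(-\partial^2+m)=-(\partial^2-m\alpha)f(-\partial^2+m)$, and the resulting sign is exactly the one produced by the leading $-b^{2m}$ in $\omega_b(L_m)$ once $\partial^2$ has been reflected; the $I_r$ relations carry no sign and match immediately.

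The main obstacle --- and the only genuinely delicate point --- is the $G_p$ compatibility, where the factor $\sqrt{-1}$ in $\omega_b(G_p)$ must be distributed consistently between $\phi|_{V_{\bar0}}$ and $\phi|_{V_{\bar1}}$. Writing $\phi|_{V_{\bar0}}$ with normalization $a$ and $\phi|_{V_{\bar1}}$ with normalization $c$, the even input ($G_p$ on $f(\partial^2)$) forces $c=\sqrt{-1}\,a$, while the odd input ($G_p$ on $\partial f(\partial^2)$), after using $(-\partial^2+2p\alpha)=-(\partial^2-2p\alpha)$, forces $-a=\sqrt{-1}\,c$. These agree precisely because $-1/\sqrt{-1}=\sqrt{-1}$, so the single choice $c=\sqrt{-1}\,a$ (e.g.\ $a=1$) satisfies both. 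Once this is verified, $\phi$ is a bijective even $\T$-homomorphism, hence an isomorphism. Finally, since $t$ and $\alpha$ enter the twisted formulas only through $t^{2p}$, $(-t)^{2p}$ and $\alpha$, all invariant under $p\mapsto-p$, the twist alters neither parameter, so the target is indeed $\mathcal{M}_t(\tau,\alpha)_b$ with $\tau^p=b^{2p}\lambda^{-p}$, completing the argument.
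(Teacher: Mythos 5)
Your proof is correct and follows essentially the same route as the paper: both exhibit the explicit bijection $f(\partial^2)\mapsto f(-\partial^2)$, $\partial g(\partial^2)\mapsto \pm\sqrt{-1}\,\partial g(-\partial^2)$ and verify the intertwining relations generator by generator, using $b^{2p}\tau^{-p}=\lambda^{p}$ and $t^{-2p}=t^{2p}$, $(-t)^{-2p}=(-t)^{2p}$. The only discrepancy is the sign of $\sqrt{-1}$ on the odd part (the paper takes $-\sqrt{-1}$), which just reflects whether the twisted action is defined via $\omega_b$ or $\omega_b^{-1}$; your choice is internally consistent with the convention you state.
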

\begin{proof}
Take $f(\partial^2)\in\mathcal{M}_t(\lambda,\alpha)_{\bar 0}$ and $\partial g(\partial^2)\in\mathcal{M}_t(\lambda,\alpha)_{\bar 1}$. Let
\begin{eqnarray*}
\Psi:\mathcal{M}_t(\lambda,\alpha)&\longrightarrow&\mathcal{M}_t(\tau,\alpha)_b
\\f(\partial^2)&\longmapsto&f(-\partial^2)
\\  \partial g(\partial^2)&\longmapsto&-\sqrt{-1}\partial g(-\partial^2)
\end{eqnarray*}
be the  linear  map from $\mathcal{M}_t(\lambda,\alpha)$ to $\mathcal{M}_t(\lambda,\alpha)_b$.
It is evident that $\Psi$ is an isomorphic mapping. It follows from
\begin{eqnarray*}
&&\omega_b(L_m)\Psi(f(\partial^2))=\Psi(\omega_b(L_m)f(\partial^2)),
\ \omega_b(L_m)\Psi(\partial g(\partial^2))=\Psi(\omega_b(L_m) \partial g(\partial^2)),
\\&&\omega_b(I_r)\Psi(f(\partial^2))=\Psi(\omega_b(I_r)f(\partial^2)),
\ \omega_b(I_r)\Psi(\partial g(\partial^2))=\Psi(\omega_b(I_r) \partial g(\partial^2)),
\\&&\omega_b(G_p)\Psi(f(\partial^2))=\Psi(\omega_b(G_p)f(\partial^2)),
\ \omega_b(G_p)\Psi(\partial g(\partial^2))=\Psi(\omega_b(G_p)\partial g(\partial^2))
\end{eqnarray*}
 that we obtain
\begin{eqnarray*}&&\omega_b(L_m)f(-\partial^2)=b^{2m}\lambda^{-m}(\partial^2+m\alpha)f(-\partial^2-m),
\\&& \omega_b(L_m)(-\sqrt{-1}\partial g(-\partial^2))=-b^{2m}\lambda^{-m}\sqrt{-1} \partial(\partial^2+m(\alpha+\frac{1}{2}))g(-\partial^2-m),
\\&&\omega_b(I_r)f(-\partial^2)=-(t^{-1}b)^{2r}\lambda^{-r}2\alpha f(-\partial^2-r),
\\&& \omega_b(I_r)(-\sqrt{-1}\partial g(-\partial^2))=-(t^{-1}b)^{2r}\lambda^{-r}(1-2\alpha) \sqrt{-1}\partial g(-\partial^2-r),
\\&&\omega_b(G_p)f(-\partial^2)=(t^{-1}b)^{2p}\lambda^{-p} \partial f(-\partial^2-p),
\\&&  \omega_b(G_p)(-\sqrt{-1}\partial g(-\partial^2))=-(-t^{-1}b)^{2p}\lambda^{-p}\sqrt{-1}(\partial^2+2p\alpha)g(-\partial^2-p)
\end{eqnarray*}
for $m\in\Z,r\in\frac{1}{2}+\Z$ and $p\in\frac{1}{2}\Z.$
Then $\mathcal{M}_t(\tau,\alpha)_b\cong{{\mathcal{M}_t}}(\lambda,\alpha)$, where $\tau^p=b^{2p}\lambda^{-p}, p\in\frac{1}{2}\Z$.  We complete  the proof.
\end{proof}

\section{Intermediate series modules}
We first recall  the definition of   weighting functor, which was introduced by J. Nilsson in \cite{N2}.

Let $\mathfrak{a}$ be a Lie (super)algebra. Suppose that ${\widetilde{\mathfrak{h}}}$   is a
commutative subalgebra of $\mathfrak{a}$ and  $\mathrm{Ad}(\widetilde{\mathfrak{h}})$ acts diagonally on $\mathfrak{a}$, namely,
 $$\mathfrak{a}:=\bigoplus_{\mu\in\widetilde{\mathfrak{h}}^*}\mathfrak{a}_\mu.$$
 Regard each $\mu\in \widetilde{\mathfrak{h}}^*$ as a homomorphism  $\bar \mu:\mathcal{U}(\widetilde{\mathfrak{h}})\rightarrow\C$.
We denote by $\mathrm{Max}(\mathcal{U}(\widetilde{\mathfrak{h}}))$ the set of maximal ideals of $\mathcal{U}(\widetilde{\mathfrak{h}})$.
\begin{defi}\label{wm611}
 For an $\mathfrak{a}$-module $M$, consider the
$\mathcal{U}(\widetilde{\mathfrak{h}})$-module
$$\mathcal{W}(M):=\bigoplus_{m\in \mathrm{Max}(\mathcal{U}(\widetilde{\mathfrak{h}}))}M/mM=\bigoplus_{\mu\in \widetilde{\mathfrak{h}}^*}M/\mathrm{ker}(\bar\mu)M.$$
 Choose $\alpha\in \widetilde{\mathfrak{h}}^*$ and $v\in M$. Define an action of  $x_\alpha\in\mathfrak{a}_\alpha$ on $\mathcal{W}(M)$ by
 $$x_\alpha(v+\mathrm{ker}(\bar \mu)M):=x_\alpha v+\mathrm{ker}(\overline {\mu+\alpha})M.$$
Then the map $\mathcal{W}$ between  $\mathcal{U}(\mathfrak{a})$-mod and $\mathcal{U}(\mathfrak{a})$-mod is called the {\bf weighting functor}.
\end{defi}
In the following, we define the weighting functor $\mathcal{W}$ for $\T$ by taking $\widetilde{\mathfrak{h}}=\mathfrak{t}=\C L_0$.
The weight modules called intermediate series of the twisted $N=2$ superconformal algebra were investigated  in \cite{LSZ}.
\begin{defi}\label{de63} Let $\sigma\in\C,m\in\Z,r\in\frac{1}{2}+\Z,p,k\in\frac{1}{2}\Z,t=\pm1$. {\bf A super intermediate series module} $A_t{(\sigma)}$ over $\T$ is a module
with a basis $\{x_k,y_k\mid k\in\frac{1}{2}\Z\}$ satisfying the  action as follows
\begin{eqnarray*}
&&L_mx_k=(-k+\sigma n)x_{k+m},\ L_my_k=(-k+m(\sigma+\frac{1}{2}))y_{k+m},
\\&& I_rx_k=-2t^{2r}(\sigma+1)x_{k+r},\ I_ry_k=-t^{2r}(2\sigma+1)y_{k+r},
\\&& G_px_k=t^{2p}y_{k+p},\ G_py_k=(-t)^{2p}(-k+(2\sigma+1)p)x_{k+p}.
\end{eqnarray*}
\end{defi}
The following theorem obtained by a very natural extension of    \cite{LSZ}.
\begin{theo}\label{253}
Let $\sigma\in\C,t,t^\prime=\pm1$. The $\T$-module  $A_t{(\sigma)}$ defined as above.
Then
\begin{itemize}
\item[\rm(1)]  the $\T$-module $A_t{(\sigma)}$ is simple if  $\sigma\neq -1,-\frac{1}{2}$;

\item[\rm(2)] $A_t{(\sigma)}\cong A_{t^\prime}{(\sigma^\prime)}$ if and only if $t=t^\prime,\sigma=\sigma^\prime$.
\end{itemize}
\end{theo}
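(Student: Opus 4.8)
The plan is to handle the two parts separately, in each case using that $A_t(\sigma)$ is a weight module for $\mathfrak{t}=\C L_0$ with $L_0x_k=-kx_k$ and $L_0y_k=-ky_k$. Consequently every graded submodule is a sum of weight spaces, and under the $\Z_2$-grading (with all $x_k$ even and all $y_k$ odd) each weight space splits as $\C x_k\oplus\C y_k$; hence a nonzero graded submodule $N$ necessarily contains some $x_k$ or some $y_k$. The heart of part (1) is to show that, when $\sigma\neq-1,-\frac12$, any single such basis vector generates all of $A_t(\sigma)$.

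Starting from $x_k$, the relation $G_px_k=t^{2p}y_{k+p}$ with $t^{2p}\neq0$ immediately produces every $y_j$ with $j\in\frac12\Z$. To pass from the $y_j$'s back to the $x_\ell$'s I would inspect the coefficient in $G_{\ell-j}y_j=(-t)^{2(\ell-j)}\big((2\sigma+1)\ell-2(\sigma+1)j\big)x_\ell$; as a function of $j$ this is nonconstant exactly when $\sigma\neq-1$, so a suitable choice of $j$ places $x_\ell$ in $N$ for every $\ell$. Starting instead from $y_k$, the coefficient $-k+(2\sigma+1)q$ in $G_qy_k=(-t)^{2q}(-k+(2\sigma+1)q)x_{k+q}$ is nonconstant in $q$ precisely when $\sigma\neq-\frac12$, so some $x_{k+q}$ lies in $N$ and we reduce to the first case. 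Together these three steps give $N=A_t(\sigma)$ whenever $\sigma\neq-1,-\frac12$.

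The excluded values are exactly where these coefficients degenerate, and keeping track of these degeneracies is the step I expect to require the most care. For $\sigma=-\frac12$ one checks that $y_0$ is killed by every $L_m$, $I_r$ and $G_p$, so $\C y_0$ is a submodule; for $\sigma=-1$ the vector $x_0$ is unreachable and the span of all remaining $x_\ell$ and all $y_j$ is a proper submodule. Although the theorem only claims simplicity for $\sigma\neq-1,-\frac12$, I would record these two observations to show that the hypotheses are sharp and to locate precisely where the generic generation argument fails.

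For part (2) the ``if'' direction is trivial, so I would focus on the ``only if'' direction, interpreting $\cong$ as an even isomorphism (as elsewhere in the paper, where parity shifts are written with $\Pi$). Given such an isomorphism $\varphi\colon A_t(\sigma)\to A_{t'}(\sigma')$, commuting with $L_0$ forces $\varphi$ to preserve weights, and preserving parity then yields $\varphi(x_k)=c_kx_k'$ and $\varphi(y_k)=d_ky_k'$ with $c_k,d_k\in\C^*$. The intertwining condition $\varphi(L_mx_k)=L_m\varphi(x_k)$ gives the functional equation $(-k+\sigma m)c_{k+m}=(-k+\sigma' m)c_k$; evaluating at $k=0$ and then at generic $m$ forces $\sigma=\sigma'$ and shows $c_k$ to be independent of $k$, and the analogous identity on the $y_k$ makes $d_k$ constant. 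Feeding this into $\varphi(G_px_k)=G_p\varphi(x_k)$, which reduces to $t^{2p}d=c\,(t')^{2p}$, the case $p=1$ gives $c=d$ and the case $p=\frac12$ gives $t=t'$. This yields $t=t'$ and $\sigma=\sigma'$ and completes the classification; the computation is the natural super-analogue of the intermediate-series analysis in \cite{LSZ}.
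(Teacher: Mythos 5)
The paper supplies no proof of this theorem at all --- it is simply attributed to ``a very natural extension of \cite{LSZ}'' --- so there is nothing on the paper's side to compare with. Judged on its own terms, your part (1) is correct and complete: the $L_0$-weight-space decomposition, the passage from any $x_k$ to all $y_j$ via $G_p$, and the two coefficient computations $(2\sigma+1)\ell-2(\sigma+1)j$ and $-k+(2\sigma+1)q$ all check out, as do your descriptions of the degenerate submodules at $\sigma=-\tfrac12$ and $\sigma=-1$. In part (2), the deduction of $\sigma=\sigma'$ also goes through (the $k=0$ specialization needs $\sigma=0$ handled separately, but that is routine).

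The genuine gap is in the deduction of $t=t'$. From $(-k+\sigma m)c_{k+m}=(-k+\sigma' m)c_k$ with $m\in\Z$ you may only conclude that $c_k$ is constant on each coset of $\Z$ in $\tfrac12\Z$, since $L_m$ shifts indices by integers; the two constants $c$ (on $\Z$) and $\tilde c$ (on $\tfrac12+\Z$) are a priori unrelated, and likewise for $d,\tilde d$. Rerunning your final step with this correction, the relation $t^{2p}d_{k+p}=(t')^{2p}c_k$ at integer $p$ gives $d=c$ and $\tilde d=\tilde c$, while at $p=\tfrac12$ it gives the pair $t\tilde c=t'c$ and $tc=t'\tilde c$, which is solvable for \emph{any} $t,t'\in\{\pm1\}$ by taking $\tilde c=tt'\,c$; it does not force $t=t'$. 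This is not a repairable defect of the argument but a defect of the statement: with Definition 6.2 as written, the even linear map $x_k\mapsto(-1)^{2k}x_k'$, $y_k\mapsto(-1)^{2k}y_k'$ is an $\T$-module isomorphism $A_t(\sigma)\to A_{-t}(\sigma)$, because an operator shifting the index by $p$ acquires the factor $(-1)^{2p}$ under this rescaling, which converts $t^{2p}$ into $(-t)^{2p}$, $(-t)^{2p}$ into $t^{2p}$ and $t^{2r}$ into $(-t)^{2r}$ while leaving the $L_m$-coefficients untouched --- i.e.\ it replaces $t$ by $-t$ throughout. Hence the ``only if $t=t'$'' half of (2) cannot be proved for the module as defined here (this also undercuts the $t\neq t'$ case of the corollary closing Section 6, which the paper derives from it; that corollary's conclusion is nevertheless true by Theorem \ref{th2ere.4}(ii)). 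You should either flag this and weaken (2) to the assertion $\sigma=\sigma'$, or verify whether the $t$-dependence of Definition 6.2 is transcribed correctly from \cite{LSZ}.
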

We will describe the main results of this section.
\begin{theo}\label{th664}
Let $\lambda\in\C^*,\alpha\in\C,t=\pm1$. As $\T$-modules, we have $\mathcal{W}(\mathcal{M}_t(\lambda,\alpha))\cong A_t(\alpha-1)$.
\end{theo}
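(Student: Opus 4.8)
The plan is to unwind Definition \ref{wm611} explicitly on $M:=\mathcal{M}_t(\lambda,\alpha)$ with $\widetilde{\mathfrak{h}}=\mathfrak{t}=\C L_0$, compute the induced action of every generator on the resulting weight spaces, and match it against the formulas of Definition \ref{de63}. First I would record that $\mathcal{U}(\mathfrak{t})=\C[L_0]$, so its maximal ideals are exactly $(L_0-a)$ for $a\in\C$, with $\bar\mu\colon L_0\mapsto a$. Writing $s:=\partial^2$, the cases $m=0$ of \eqref{2.22} and \eqref{2.33} show that $L_0$ acts on $M$ as multiplication by $s$ on both $\C[s]$ and $\partial\C[s]$. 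Hence for each $a$,
$$M/(L_0-a)M\cong \C[s]/(s-a)\oplus\partial\C[s]/(s-a)=\C\,\bar 1_a\oplus\C\,\overline\partial_a,$$
where $\bar 1_a,\overline\partial_a$ are the images of $1$ and $\partial$; both are $L_0$-eigenvectors of eigenvalue $a$. Thus $\mathcal{W}(M)=\bigoplus_{a\in\C}(\C\bar 1_a\oplus\C\overline\partial_a)$ is a weight module.

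The key computational observation I would isolate is that a homogeneous $X\in\T$ of $\mathrm{ad}(L_0)$-degree $d$ (so $d=-m,-r,-p$ for $L_m,I_r,G_p$) sends the weight-$a$ space to the weight-$(a-d)$ space, and the induced map is read off from \eqref{2.22}--\eqref{2.77} by evaluating the polynomial coefficient at the \emph{target} value $s\equiv a-d$. Carrying this out gives
$$L_m\bar 1_a=\lambda^m\big(a+m(\alpha-1)\big)\bar 1_{a-m},\quad I_r\bar 1_a=-2t^{2r}\lambda^r\alpha\,\bar 1_{a-r},\quad G_p\bar 1_a=t^{2p}\lambda^p\,\overline\partial_{a-p},$$
together with the three companion formulas on $\overline\partial_a$ coming from \eqref{2.33}, \eqref{2.55}, \eqref{2.77}. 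I want to emphasize that the $\alpha-1$ appearing in the $L_m$-action (in place of the $\alpha$ of \eqref{2.22}) is \emph{forced} by evaluating the coefficient $s+m\alpha$ at $s=a-m$; this single substitution is the origin of the shift $\sigma=\alpha-1$.

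Finally I would fix the coset $\tfrac12\Z$ (weights $a=-k$, $k\in\tfrac12\Z$) and exhibit the isomorphism
$$\Phi(x_k)=\lambda^{k}\bar 1_{-k},\qquad \Phi(y_k)=\lambda^{k}\overline\partial_{-k}\quad(k\in\tfrac12\Z).$$
The power $\lambda^{k}$ is chosen precisely so that the factor $\lambda^{\mathrm{shift}}$ produced by each generator cancels against the rescaling, leaving the $\lambda$-free coefficients of Definition \ref{de63}. Verifying the six relations then reduces to checking $-k+m(\alpha-1)=-k+\sigma m$, $-2t^{2r}\alpha=-2t^{2r}(\sigma+1)$ and, on the odd part, $m(\alpha-\tfrac12)=m(\sigma+\tfrac12)$, $t^{2r}(1-2\alpha)=-t^{2r}(2\sigma+1)$, and $(-t)^{2p}(-k-p+2p\alpha)=(-t)^{2p}(-k+(2\sigma+1)p)$, all of which hold identically with $\sigma=\alpha-1$; the $G_p$-relations also confirm that $\Phi$ interchanges even and odd parts correctly.

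I expect the main obstacle to be pure bookkeeping rather than any conceptual difficulty: one must keep the degree shift $a\mapsto a-d$, the index reflection $k=-a$, and the rescaling $\lambda^{k}$ mutually consistent across all six brackets simultaneously, since a sign slip in the exponent of $\lambda$ makes the $L_m$ and $I_r$ normalizations incompatible. A final remark I would include is that each remaining coset $a_0+\tfrac12\Z$ of $\C/\tfrac12\Z$ yields, by the identical computation, a weight-shifted copy of the same structure, so that the stated isomorphism $\mathcal{W}(\mathcal{M}_t(\lambda,\alpha))\cong A_t(\alpha-1)$ is to be read on the connected component supported on $\tfrac12\Z$.
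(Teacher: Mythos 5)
Your proposal is correct and follows essentially the same route as the paper: evaluate each generator on the cosets of $\mathbf{1}$ and $\partial\mathbf{1}$ in $M/(L_0-a)M$ by substituting the target eigenvalue for $\partial^2$, then rescale by $\lambda^{k}$ to strip the $\lambda$-powers and read off $\sigma=\alpha-1$ (the paper's $\widehat{w}_k=\lambda^k w_{-k}$, $\widehat{v}_k=\lambda^k v_{-k}$ is exactly your $\Phi$). The only blemish is the prose sentence on the degree shift --- with $\mathrm{ad}(L_0)$-eigenvalue $d=-m$ the target weight is $a+d=a-m$, not $a-d$ --- but all your displayed formulas use the correct target space, so nothing in the argument is affected.
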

\begin{proof}
Let $\mathcal{M}_t(\lambda,\alpha)=M$ as in Definition \ref{wm611}. For each $n\in\frac{1}{2}\Z$, let $\mu_n\in{\widetilde{\mathfrak{h}}}^*$ be the homomorphism  $L_0=\partial^2\mapsto n.$
We see that $\mathrm{ker}(\overline{\mu_n})=\C[\partial^2](\partial^2-n)$, which is the maximal ideal of $\C[\partial^2].$
Assume that $\mathbf{1}$ is a generator   of $\mathcal{M}_t(\lambda,\alpha)$.
Setting  $$w_n=\mathbf{1}+\mathrm{ker}(\overline{\mu_n})\mathcal{M}_t(\lambda,\alpha)\in\mathcal{M}_t(\lambda,\alpha)_{\bar0}\  \mathrm{and}\ v_n=\partial\mathbf{1}+\mathrm{ker}(\overline{\mu_n})\mathcal{M}_t(\lambda,\alpha)\in\mathcal{M}_t(\lambda,\alpha)_{\bar1},$$
we can  check that
\begin{eqnarray}\label{12qwa}
\nonumber L_mw_{-n}&=&\lambda^m(\partial^2+m\alpha)\mathbf{1}+\mathrm{ker}(\overline{\mu_{-n-m}})\mathcal{M}(\lambda,\alpha)
\\&=&(-n+m(\alpha-1))\lambda^mw_{-(n+m)},
\\\nonumber L_mv_{-n}&=&\lambda^m(\partial^2+m(\alpha+\frac{1}{2}))\partial\mathbf{1}+\mathrm{ker}(\overline{\mu_{-n-m}})\mathcal{M}(\lambda,\alpha)
\\&=&(-n+m(\alpha-\frac{1}{2}))\lambda^mv_{-(n+m)},
\\ I_rw_{-n}&=&-2t^{2r}\lambda^r\alpha\mathbf{1}+\mathrm{ker}(\overline{\mu_{-n-r}})\mathcal{M}(\lambda,\alpha)=-2\alpha t^{2r}\lambda^rw_{-(n+r)},
\\ I_rv_{-n}&=&t^{2r}\lambda^{r}(1-2\alpha)\partial\mathbf{1}+\mathrm{ker}(\overline{\mu_{-n-r}})\mathcal{M}(\lambda,\alpha)=(1-2\alpha)t^{2r}\lambda^rv_{-(n+r)},
\\ G_pw_{-n}&=&t^{2p}\lambda^{p}\partial\mathbf{1}+\mathrm{ker}(\overline{\mu_{-n-p}})\mathcal{M}(\lambda,\alpha)=t^{2p}\lambda^pv_{-(n+p)},
\\\nonumber  G_pv_{-n}&=&(-t)^{2p}\lambda^p(\partial^2+2p\alpha)\mathbf{1}+\mathrm{ker}(\overline{\mu_{-n-p}})\mathcal{M}(\lambda,\alpha)
\\&=&(-t)^{2p}(-n+(2\alpha-1)p)\lambda^pw_{-(n+p)}\label{342qwa}
\end{eqnarray}
for any $m\in\Z,r\in\frac{1}{2}+\Z,p\in\frac{1}{2}\Z$.
We write $\widehat{w}_p=\lambda^pw_{-p},\widehat{v}_p=\lambda^p v_{-p}$ in \eqref{12qwa}-\eqref{342qwa}, one gets
\begin{eqnarray*}
&&L_m\widehat{w}_n=\big(-n+m(\alpha-1)\big)\widehat{w}_{n+m},
\\&& L_m\widehat{v}_n
=\big(-n+m(\alpha-\frac{1}{2})\big)\widehat{v}_{n+m},
\\&& I_r\widehat{w}_n=-2t^{2r}\alpha\widehat{w}_{n+r},
\ I_r\widehat{v}_n=(1-2\alpha)t^{2r}\widehat{v}_{n+r},
\\&& G_p\widehat{w}_n=t^{2p}\widehat{v}_{n+p},
\ G_p\widehat{v}_n=(-t)^{2p}\big(-n+(2\alpha-1)p\big)\widehat{w}_{n+p}.
\end{eqnarray*}
Then we conclude that $\mathcal{W}(\mathcal{M}_t(\lambda,\alpha))\cong A_t(\alpha-1)$.
\end{proof}
\begin{rema}
By   Definition \ref{de63}, it is clear  that  $A_t(\alpha-1)$ is simple if $\alpha\neq0,\frac{1}{2}$. Then  we know that the weighting functor cannot
keep the irreducibility of $\T$-module $\mathcal{M}_t(\lambda,\alpha)$ to  $A_t(\alpha-1)$ for $\alpha=\frac{1}{2}$. From Proposition $10$ of \cite{N2}, we obtain
that $A_t(\alpha-1)$ is a coherent family of degree $1$.
\end{rema}
\begin{coro}
Let $\lambda,\lambda^\prime\in\C^*,\alpha,\alpha^\prime\in\C,t,t^\prime=\pm1.$ If $\alpha\neq\alpha^\prime$ or $t\neq t^\prime$, then we obtain $\mathcal{M}_t(\lambda,\alpha)\ncong \mathcal{M}_{t^\prime}(\lambda^\prime,\alpha^\prime)$.
\end{coro}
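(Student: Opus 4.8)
The plan is to argue by contraposition and exploit the fact that the weighting functor $\mathcal{W}$, being a functor on the category of $\T$-modules, necessarily carries isomorphic modules to isomorphic modules. Thus any $\T$-isomorphism $\mathcal{M}_t(\lambda,\alpha)\cong\mathcal{M}_{t^\prime}(\lambda^\prime,\alpha^\prime)$ descends, after applying $\mathcal{W}$, to an isomorphism of the associated intermediate series modules, whose isomorphism type is already completely pinned down by Theorem \ref{253}(2). Since that classification depends only on $t$ and the shifted parameter, this will force $t=t^\prime$ and $\alpha=\alpha^\prime$, contradicting the hypothesis.

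Concretely, I would first assume, contrary to the claim, that $\mathcal{M}_t(\lambda,\alpha)\cong\mathcal{M}_{t^\prime}(\lambda^\prime,\alpha^\prime)$ as $\T$-modules while nevertheless $\alpha\neq\alpha^\prime$ or $t\neq t^\prime$. Applying $\mathcal{W}$ and invoking Theorem \ref{th664} on both sides, I obtain $A_t(\alpha-1)\cong\mathcal{W}(\mathcal{M}_t(\lambda,\alpha))\cong\mathcal{W}(\mathcal{M}_{t^\prime}(\lambda^\prime,\alpha^\prime))\cong A_{t^\prime}(\alpha^\prime-1)$. By the ``only if'' direction of Theorem \ref{253}(2), an isomorphism $A_t(\alpha-1)\cong A_{t^\prime}(\alpha^\prime-1)$ entails $t=t^\prime$ and $\alpha-1=\alpha^\prime-1$, that is, $\alpha=\alpha^\prime$. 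This directly negates the standing hypothesis, and the resulting contradiction completes the proof.

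The step deserving the most care --- and the only genuine point beyond bookkeeping --- is justifying that $\mathcal{W}$ preserves isomorphisms; this is immediate once one recalls from Definition \ref{wm611} that $\mathcal{W}$ is defined as a functor (indeed the right exact functor of \cite{N2}), so that a module isomorphism induces compatible isomorphisms on each quotient $M/\mathrm{ker}(\bar\mu)M$ and hence on the direct sum. I do not expect any computational obstacle, since Theorem \ref{th664} has already identified the image of each $\mathcal{M}_t(\lambda,\alpha)$. It is worth emphasizing that this argument is strictly weaker than Theorem \ref{th2ere.4}(ii): the weighting functor discards all dependence on $\lambda$, so it can only detect the parameters $t$ and $\alpha$ surviving in $A_t(\alpha-1)$, which is precisely why the conclusion says nothing about $\lambda$ versus $\lambda^\prime$.
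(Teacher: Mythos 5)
Your proof is correct and follows essentially the same route as the paper: assume the modules are isomorphic, apply the weighting functor, identify the images as $A_t(\alpha-1)$ and $A_{t^\prime}(\alpha^\prime-1)$ via Theorem \ref{th664}, and invoke Theorem \ref{253}(2) to force $t=t^\prime$ and $\alpha=\alpha^\prime$, a contradiction. Your additional remark justifying that $\mathcal{W}$ preserves isomorphisms is a reasonable explicit gloss on a step the paper leaves implicit.
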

\begin{proof}
Suppose  $\mathcal{M}_t(\lambda,\alpha)\cong  \mathcal{M}_{t^\prime}(\lambda^\prime,\alpha^\prime)$.
Then by Theorem \ref{th664} and Theorem \ref{253} (2), one has $\alpha=\alpha^\prime$, $t=t^\prime$, which leads to a contradiction.
\end{proof}

\section*{Acknowledgements}
This work was supported by the National Natural Science Foundation of China (Grant No.11801369).

\small 
\bigskip

Haibo Chen
\vspace{2pt}

  School of  Statistics and Mathematics, Shanghai Lixin University of  Accounting and Finance,   Shanghai
201209, China

\vspace{2pt}
hypo1025@163.com

\bigskip

Xiansheng Dai
\vspace{2pt}

 School of Mathematical Sciences, Guizhou Normal University,
Guiyang 550001,  China

\vspace{2pt}

daisheng158@126.com

\bigskip
Mingqiang  Liu
\vspace{2pt}

Three Gorges Mathematics  Research Center, China Three Gorges University, Yichang 443002, China

\vspace{2pt}
mingqiangliu@163.com

\end{document}